\numberwithin{equation}{section}
\newtheorem{theorem}{Theorem}[section]
\newtheorem{lemma}[theorem]{Lemma}
\newtheorem{proposition}[theorem]{Proposition}
\newtheorem{rem}[theorem]{Remark}
\newcommand{\RN}[1]{%
  \textup{\uppercase\expandafter{\romannumeral#1}}%
}
\def\captionfont@{\footnotesize}
\def\captionheadfont@{\scshape}
\long\def\@makecaption#1#2{%
  \vspace{2mm}
  \setbox\@tempboxa\vbox{\color@setgroup
    \advance\hsize-6pc\noindent
    \captionfont@\captionheadfont@#1\@xp\@ifnotempty\@xp
        {\@cdr#2\@nil}{.\captionfont@\upshape\enspace#2}%
    \unskip\kern-6pc\par
    \global\setbox\@ne\lastbox\color@endgroup}%
  \ifhbox\@ne 
    \setbox\@ne\hbox{\unhbox\@ne\unskip\unskip\unpenalty\unkern}%
  \fi
  \ifdim\wd\@tempboxa=\z@ 
    \setbox\@ne\hbox to\columnwidth{\hss\kern-6pc\box\@ne\hss}%
  \else 
    \setbox\@ne\vbox{\unvbox\@tempboxa\parskip\z@skip
        \noindent\unhbox\@ne\advance\hsize-6pc\par}%
\fi
  \ifnum\@tempcnta<64 
    \addvspace\abovecaptionskip
    \moveright 3pc\box\@ne
  \else 
    \moveright 3pc\box\@ne
    \nobreak
    \vskip\belowcaptionskip
  \fi
\relax
}
\def\writefig#1 #2 #3 {\rlap{\kern #1 truecm
\raise #2 truecm \hbox{#3}}}
\renewcommand{\tilde}{\widetilde}
\renewcommand{\hat}{\widehat}
\newcommand{\cF}{\ensuremath{\mathcal F}}
\newcommand{\cP}{\ensuremath{\mathcal P}}
\newcommand{\llb}{\llbracket} 
\newcommand{\rrb}{\rrbracket}
\newcommand{\N}{{\ensuremath{\mathbb N}} } 
\renewcommand{\P}{{\ensuremath{\mathbb P}} }
\newcommand{\eps}{\varepsilon}
\renewcommand{\O}{\Omega}
\newcommand{\1}[1]{\mathbf{1}_{\left\lbrace   #1   \right\rbrace }}
\renewcommand{\t}{\tau}
\renewcommand{\a}{\alpha}
\renewcommand{\a}{\alpha}
\newcommand{\gb}{\beta}
\newcommand{\gga}{\gamma}            
\renewcommand{\d}{\delta}
\newcommand{\w}{\omega}
\newcommand{\bP}{{\ensuremath{\mathbf P}} }
\newcommand{\bE}{{\ensuremath{\mathbf E}} }
\newcommand{\E}{\mathbb{E}}
\newcommand{\Z}{\mathbb{Z}}
\newcommand{\R}{\mathbb{R}}
\newcommand{\g}{\gamma}
\renewcommand{\b}{\beta}
\begin{document}
\title{Directed polymer in $\g$-stable Random Environments}


\author[Roberto Viveros]{Roberto Viveros}
 \address{Roberto Viveros \hfill\break
IMPA\\
Estrada Dona Castorina, 110\\ Rio de Janeiro 22460-320 \\ Brazil.}
\email{rviveros@impa.br}

\keywords{polymer model, free energy}

\begin{abstract}
{The transition from a weak-disorder (diffusive phase) to a strong-disorder (localized phase) for directed polymers in a random environment is a well studied phenomenon. In the most common setup, it is established that the phase transition is trivial when the transversal dimension $d$ equals $1$ or $2$ (the diffusive phase is reduced to $\beta=0$) while when $d\ge 3$, there is a critical temperature $\beta_c\in (0,\infty)$ which delimits the two phases.
The proof of the existence of a diffusive regime for $d\ge 3$ is based on a second moment method \cite{bolthausen, comets, Imbrie}, and thus relies heavily on the assumption that the variable which encodes the disorder intensity 
(which in most of the mathematics literature assumes the form $e^{\beta \eta_x}$), has finite second moment. The aim of this work is to investigate how the presence/absence of phase transition may depend on the dimension $d$ in the case when the disorder variable displays heavier tail. To this end we replace $e^{\beta \eta_x}$ by $(1+\beta \omega_x)$ where $\omega_x$ is in the domain of attraction of a stable law with parameter $\gga \in (1, 2)$.}
 In this setup we show that we have a non-trivial phase transition if and only if 
 $\g > 1 + 2/d$. More precisely, when $\g \leq 1 + 2/d$, the free-energy of the system { is smaller than its annealed counterpart } at every temperature whereas when $\g > 1 + 2/d$ { the  martingale sequence of renormalized  partition functions} converge to an almost surely  positive random variable for $\beta$ sufficiently small.
\end{abstract}

\maketitle

	\section{Introduction}
\subsection{The model}	
A directed polymer system consists in a random distribution of walks or paths in $\Z^d$ parametrized by time. The graph of the walk in $\Z^{d+1}$ is the \textit{polymer} which stretches in the time direction and so is called \textit{directed}. We consider walks interacting with a random space-time environment, with power-law distribution and we show bounds on the free energy at high temperature. Directed polymers in a random environment have appeared originally in the physics literature as an effective model for the interface in two-dimensional Ising model with random exchange interactions \cite{huse} and has become an interesting subject of study for many authors ever since (see \cite{review, saintf} for a review on the matter).

Consider the following version of the directed polymer model {(we opted to introduce the model first in the more conventional setup with the environment randomness appearing in exponential form as it is the more convenient option when referring to the existing literature. We introduce and justify our modified setup in Section \ref{our work})}:let $\bP_x$ be the probability measure on the space $\left( \O, \mathcal{F} \right)  := \left( {\left( \Z^d\right) }^{\N}, \cP(\Z^d)^{\otimes \N}\right) $ of sequences $S:=(S_n)_{n\geq 0}$ such that:
\begin{equation}
\begin{split}\label{first}
&S_0 = x,\\
&\{S_{n} - S_{n-1}\}_{n\geq 1}\text{ is an IID sequence, and } \\
&\bP_x [S_1 = x + e_j] = \bP_x [S_1 = x - e_j] = \frac{1}{2d},
\end{split}
\end{equation}
for all $j\leq d$ where $\{e_1,...,e_d\}$ is the canonical basis of $\R^d$. The set of points $\{(n,S_n): n \geq 0 \} \subset \Z^{1+d}$ represents the graph of a simple random walk on $\mathbb{Z}^d$.

Independently, also consider a sequence of IID random variables $\eta := \{\eta_{n,z}: n \in \N, z \in \Z^{d}\}$, called \textit{ the environment}, defined on a probability space $(\Lambda, \cF, \P)$, that satisfies,
\begin{equation} \label{condition}
\begin{split} 
&\E[\eta_{0,0}] =0 \text{ and} \\
& \E[\exp (\b \eta_{0,0})] <  \infty, \text{ for all } \b \in \R.  
\end{split}
\end{equation}
For a given $\beta > 0$, $N  \in \N$ and a fixed realization of the environment $\eta$, we define the measure $\bP_{N}^{\beta,\eta}$ on the space $\O$, called the \textit{polymer measure}, by its Radon-Nikodym derivative with respect to $\bP_0$:
\begin{equation} \label{poly}
\dfrac{\mathrm{d} \bP_{N}^{\beta,\eta}}{\mathrm{d} \bP_0}(S) = \dfrac{1}{Z_{N}^{\beta,\eta}} \exp \left(\beta \sum_{n=1}^{N} \eta_{n,S_n} \right),  
\end{equation}
where $Z_{N}^{\beta,\eta}$ is the positive normalization factor that makes $\bP_{N}^{\beta,\eta}$ a probability measure. 
We call $Z_{N}^{\beta,\omega}$ the \textit{partition function} of the system and its value is given by
\begin{equation}
Z_{N}^{\beta,\eta} = \bE_0 \left[ \exp \left(\beta \sum_{n=1}^{N} \eta_{n,S_n} \right) \right] = (2d)^{-N} \sum_{S \in \O_N}  \exp \left(\beta \sum_{n=1}^{N} \eta_{n,S_n} \right),
\end{equation}
where 
\begin{equation}
\O_N := \left\lbrace S \in \Z^N: S_0 = 0, |S_n - S_{n-1}|=1, \forall n \in [1,N] \cap \Z \right\rbrace.
\end{equation}
The goal for this model is to study how the presence of the environment affects the distribution of the random walk. Intuitively, this new measure $\bP_{N}^{\beta,\eta}$ rewards (penalizes) walks that visits sites with higher (smaller) values of the environment. The parameter $\b$ (the inverse temperature) is used to increase or decrease the possible influence of the environment over the measure $\bP_{N}^{\beta,\eta}$. Notice that when $\gb=0$, $\bP_{N}^{\beta,\eta}$ becomes $\bP_0$.


\subsection{Known facts}
In \cite{bolthausen}, Bolthausen observed that the renormalized partition function 
\begin{equation}
W_N^{\beta,\eta} := \frac{Z_{N}^{\beta,\eta}}{\E \left[ Z_{N}^{\beta,\eta}\right] },  
\end{equation}
is a positive martingale with respect to the sequence of $\sigma$-fields $\left\lbrace \mathcal{G}_N\ \right\rbrace _{N \geq 0}$ where $\mathcal{G}_N := \sigma\{\eta_{n,z}: 0 \leq n \leq N, z \in \Z^d \} $. By the
Martingale Convergence Theorem, it follows that the limit
\begin{equation}
W_{\infty}^{\beta,\eta} := \lim_{N \to \infty} W_{N}^{\beta,\eta},
\end{equation}
exists $\P$-\textit{a.s.} and is a non-negative random variable. The event $\{W_{\infty}^{\beta,\eta} = 0\}$ belongs to the tail $\sigma$-field of $\{\mathcal{G}_N, N \geq 0\}$. Hence, by Kolmogorov's $0-1$ Law,
\begin{align}\label{dichotomy}
&\P \left\lbrace  W_{\infty}^{\beta}  > 0  \right\rbrace \in \{0,1\}. 
\end{align}
This dichotomy allows to define a natural manner to characterize the influence of disorder. Following standard terminology we say that we have \textit{weak disorder} if $W_{\infty}^{\beta} > 0$ $\P$-\textit{a.s.} and \textit{strong disorder} if $W_{\infty}^{\beta} = 0$ $\P$-\textit{a.s.}.

Roughly speaking, weak disorder implies that the polymer paths have the same behavior as the simple random walk (delocalized phase). A series of papers \cite{Imbrie, bolthausen,albeverio,zhou,comets} lead to the following: Assuming $d\geq 3$ and weak disorder, the measures $\bP_N^{\b, \eta}$, after rescaling, converge in law to the Brownian Motion, for almost all realizations of the environment.

On the other hand, strong disorder implies that the polymer is largely influenced by the disorder and is attracted to sites with
favorable environment (localize phase). 
We mention \cite[Theorem 2.1]{cometsyoshido}, where it is been shown that for $\b > 0$, 
\begin{equation}
\left\lbrace W_{\infty}^{\beta, \eta} = 0  \right\rbrace = 
\left\lbrace \sum_{n \geq 1} \left( \bP_{n-1}^{\b, \eta}\right)^{\otimes 2}  [S_n = S_n'] = \infty \right\rbrace \text{   }\P\text{-a.s.,}
\end{equation}
where $S$ and $S'$ are two independent polymers with distribution $\bP_{n-1}^{\b, \eta}$. Moreover, if $\P[W_{\infty}^{\beta, \eta} = 0] = 1$, then there exists some constants $c_1$, $c_2 \in (0,\infty)$ such that,
\begin{equation}
-c_1 \log W_N^{\b,\eta} \leq  \sum_{n \geq 1}^N \left( \bP_{n-1}^{\b, \eta}\right)^{\otimes 2}  [S_n = S_n'] \leq -c_2 \log W_N^{\b,\eta},
\end{equation}
for $N$ large enough, $\P$-a.s. This result suggests that when we have strong disorder, the polymer is more attracted to sites with favorable environment and the probability of two of them to occupy the same last site increases (recall that for the simple random walk, $\bP_0[S_n = S_n']\sim \frac{C_d}{n^{d/2}}$). Also the decay property of $W_N$ is reflected in some specific localization property of the path.

In \cite{comets}, it was also shown that there exists a critical value $\b_c  = \b_c(d) \in [0,\infty]$ with
\begin{align}
\b_c &= 0 \text{   for }d = 1,2 \text{ and} \\
\b_c &> 0 \text{   for }d \geq 3,
\end{align}
such that there is weak disorder for $\b \in [0,\b_c)$ and strong disorder for $\b > \b_c$.

\subsection{Free energy.} 

A lot of information about the model is encoded in the following quantity
\begin{equation} \label{freenergy2}
p(\beta) := \lim_{N \to \infty} \dfrac{1}{N}   \log  W_{N}^{\beta,\eta} ,
\end{equation}
called the \textit{free energy} of the model. This limit exists and is non-random \cite[Proposition 2.5]{cometsyoshido}. Moreover, the function $\b \mapsto p(\b)$ is continuous and non-increasing. In particular, there exists
$\bar{\beta}_c = \bar{\beta}_c(d)$ with
\begin{equation}
0 \leq \beta_c \leq \bar{\beta_c} \leq \infty, 
\end{equation}
such that
\begin{equation}
p(\beta) = 
\begin{cases}
= 0       & \quad \text{if } \beta \leq  \bar{\beta_c} \\
< 0       & \quad \text{if } \beta >  \bar{\beta_c} \\
\end{cases}
\end{equation}
Notice that if $W_\infty^{\gb,\eta} > 0$ then $p(\gb)=0$. In view of this, we say that \textit{very strong disorder} holds when $p(\gb)<0$. 

Some estimates have been proved for the free energy. In dimension $d=1$, it is known that $p(\b)$ is of order $-\b^4$ as $\b \to 0$ \cite{Lacoin, Watbled, Alexander}. In \cite{Nashimi} is been shown that, under some conditions on the environment,
\begin{equation}
\lim_{\b \to 0} \frac{p(\b)}{\b^4} = - \frac{1}{6}. 
\end{equation}
In dimension $d = 2$, it has been proved \cite{d2} that,
\begin{equation}
\lim_{\b \to 0} \b^2 \log |p(\b)| = -\pi.
\end{equation}
In particular, we have that $\b_c = \bar{\b_c} = 0,$ for $d = 1,2$.

\subsection{Our work} \label{our work}

The techniques used to prove weak disorder in dimension $d \geq 3$ relies, in a crucial way, on the boundedness of the second moment of the partition function \cite{comets}. In the present paper, we study the model in the case where the environment is IID but with a distribution belonging to the domain of attraction of a stable law with parameter $ \g \in (1, 2)$; In this case the partition function has an infinite second moment. Specifically we consider the sequence of IID random variables $\omega = \{\omega_{n,z}: n \in \N, z \in \Z^{d}\}$, that satisfies,
\begin{equation} \label{condition2}
\begin{split}
&\omega_{0,0} \geq -1 \quad \P \text{-a.s.,} \\ 
&\E[\omega_{0,0}] =0 \text{ and} \\
& \P[\w_{0,0} > x] \stackrel{x \to \infty}{\sim} C_\P x^{-\g}, \text{ for } \g \in (1,2),
\end{split}
\end{equation}
as the environment. For $\beta \in [0,1)$, $N  \in \N$ and a fixed realization of the environment $\omega$, we redefine the polymer measure $\bP_{N}^{\beta,\omega}$ as
\begin{equation} \label{poly2}
\dfrac{\mathrm{d} \bP_{N}^{\beta,\omega}}{\mathrm{d} \bP_0}(S) = \dfrac{1}{Z_{N}^{\beta,\omega}} \left(\prod_{n=1}^{N} (1 + \beta \omega_{n,S_n}) \right),
\end{equation}
where as before, the partition function $Z_{N}^{\beta,\omega}:= \bE_0 \left[\prod_{n=1}^{N} (1 + \beta \omega_{n,S_n}) \right] $. Notice that this measure is well defined since the environment is bounded below by our assumption. 
Our purpose for this model is to understand how the parameters $\b$ and $\g$, affects the measure $\bP_0$ and the existence of the localize phase. We choose to work with the expression \eqref{poly2} for our polymer measure, because we do not want changes in disorder intensity $\b$ to affect { the power-tail { exponent} of the environment's distribution $\g$ which is the parameter whose influence on the phase transition we wish to study.}

{
Our assumption $\g \in (1,2)$ makes  the second moment of the partition function infinite.  Since the second moment method plays such a crucial role in the analysis in \cite{bolthausen, Imbrie}, it is reasonable to expect that the picture differs in this case, possible due to the influence of extreme values 
of field $\omega$ as often observed in heavy tailed setups.}
We prove that for some values of $\g$, we have strong disorder for all $\b >0$, in all dimensions: specifically, for $d \geq 3$, there is a critical value $\g_c = \g_c(d) := 1 + \dfrac{2}{d}$, such that $\g \in (1,\g_c]$ implies strong disorder, for all $ \b \in (0,1)$ and $\g \in (\g_c, 2]$ implies weak disorder, for all $\b > 0$ sufficiently small. We summarize our results bellow. The free energy can now be written as 
\begin{equation} \label{fe}
p(\b) := \lim_{N \to \infty} \frac{1}{N} Z_{N}^{\beta,\omega},
\end{equation}
since $\E \left[Z_{N}^{\beta,\omega}\right] = 1$. The proof of the convergence of \eqref{fe} follows the same lines as \cite[Proposition 2.5]{cometsyoshido} for which is omitted in this manuscript. Nevertheless, we show continuity and monotonicity of $\b \to p(\b)$ in Theorem \ref{freenergy}. From now on, we assume the environment always satisfies \eqref{condition2} and unless otherwise specified, the polymer measure/partition function is the one defined in \eqref{poly2}.

\begin{theorem} \label{1st}
	When the environment satisfies the condition \eqref{condition2} and if $ \g  \leq \g_c$, very strong disorder holds, for all values of $\b >0$, in all dimensions $d \geq 1$. In particular, if $d \geq 3$ and $ \g  < \g_c$,		
	\begin{equation}\label{alpha}
	\lim_{\b \to 0} \frac{\log |p(\b)|}{\log \b} = \a,
	\end{equation}
	where $\a = \a(d,\g) := \frac{\g(\g_c -1)}{\g_c - \g}$.	Also, if $ \g  = \g_c$, we have that,
	\begin{equation}\label{inf}
	\lim_{\b \to 0} \frac{\log |p(\b)|}{\log \b} = \infty.
	\end{equation}
\end{theorem}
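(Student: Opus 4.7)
The strategy is the fractional moment method combined with a coarse-graining scheme. Since the tails of $\w$ have exponent $\g \in (1,2)$, $\E[(Z_N^{\b,\w})^\theta]$ is finite for every $\theta \in (0,\g)$, and for any such $\theta \in (0,1)$ Jensen's inequality applied to the concave functions $\log$ and $x\mapsto x^\theta$ yields
\begin{equation*}
p(\b) \;\le\; \liminf_{N \to \infty} \frac{1}{\theta N}\log \E\bigl[(Z_N^{\b,\w})^\theta\bigr].
\end{equation*}
Thus both very strong disorder and the upper bounds on $p(\b)$ encoded in \eqref{alpha}--\eqref{inf} reduce to proving exponential decay of $\E[(Z_N^{\b,\w})^\theta]$ at an explicit rate in $\b$.

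The coarse-graining proceeds at a block scale $L = L(\b)$. Decomposing $Z_{NL}^{\b,\w}$ at the block endpoints and iterating, together with the subadditivity $(\sum_i a_i)^\theta \le \sum_i a_i^\theta$ valid for $\theta \in (0,1)$ and the independence of $\w$ across disjoint blocks, one obtains
\begin{equation*}
\E\bigl[(Z_{NL}^{\b,\w})^\theta\bigr] \;\le\; \Bigl(\sum_y \E[(Z_L^{0,y})^\theta]\Bigr)^N,
\end{equation*}
where $Z_L^{0,y}$ denotes the point-to-point block partition function. The correct $L$ is dictated by a \emph{one big jump} heuristic: in a block of $\sim L^{1+d/2}$ space-time points, the typical maximum of $\w$ is of order $L^{(1+d/2)/\g}$; a walk crossing such a site picks up a factor $\b L^{(1+d/2)/\g}$ weighted by the hitting probability $\sim L^{-d/2}$. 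Balancing these contributions against the local-limit entropy $\sum_y \bP_0[S_L = y]^\theta \asymp L^{d(1-\theta)/2}$ identifies the critical scale
\begin{equation*}
L_\b \;\sim\; \b^{-\frac{2\g}{d(\g_c-\g)}},
\end{equation*}
at which the block sum drops strictly below $1$ by a definite amount, giving $p(\b) \le -cL_\b^{-1} \asymp -\b^\a$ and hence the lower bound on $|p(\b)|$ required in \eqref{alpha}. For $\g \le \g_c$ (which covers every $\g \in (1,2)$ when $d = 1,2$), the same coarse-graining with $L$ chosen large enough in $\b$ always produces very strong disorder; for $\g = \g_c$ the balance degenerates, $L_\b$ grows faster than any polynomial in $1/\b$, and \eqref{inf} follows.

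The matching upper bound $|p(\b)| \le \b^{\a(1+o(1))}$ in \eqref{alpha} is obtained by restricting $Z_{NL_\b}^{\b,\w}$ to walks that, in every block, pass through a prescribed space-time site where $\w$ exceeds the block maximum predicted above, and then combining independence across blocks with concentration of the log-partition function to derive $\E[\log Z_N^{\b,\w}] \ge -C\b^\a N$. The main technical obstacle is the single-block estimate $\E[(Z_L^{0,y})^\theta]$: the naive Jensen bound $\le \bP_0[S_L = y]^\theta$ gives $\sum_y \bP_0[S_L = y]^\theta \asymp L^{d(1-\theta)/2} \to \infty$, so one must exploit the heavy tails of $\w$ to obtain a strict improvement. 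The natural approach is a truncation argument splitting $\w$ at level $L^{(1+d/2)/\g}$: the bounded part is controlled by second-moment estimates (the truncated environment has all moments), the heavy-tail remainder is controlled by the explicit tail assumption in \eqref{condition2}, and the two pieces combine through $(a+b)^\theta \le a^\theta + b^\theta$ to yield the required improvement precisely at the scale $L_\b$. That this scale is also the threshold beyond which the fractional moment method ceases to produce exponential decay is the structural reason why $\g_c = 1 + 2/d$ is the critical exponent.
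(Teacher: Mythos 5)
Your overall architecture (Jensen to reduce $p(\beta)$ to fractional moments, coarse graining at scale $L_\beta\sim\beta^{-\alpha}$, subadditivity of $x\mapsto x^\theta$) matches the paper's Section~2, and you identify the correct threshold $n^{(2+d)/(2\gamma)}$; but the crucial single-block estimate is not established by the mechanism you propose. The partition function does not split additively when you truncate $\omega$, and, more importantly, ``second-moment estimates for the bounded part'' point in the wrong direction: at the relevant scale the environment truncated at level $L^{(2+d)/(2\gamma)}$ has variance of order $\beta^2 L^{(2+d)(2-\gamma)/(2\gamma)}$, which with $L=\beta^{-\alpha}$ diverges as $\beta\to0$ (e.g.\ $d=3$, $\gamma=3/2$ gives $\beta^{-3}$), so no second-moment control of the truncated block partition function is available --- and even where it were, it would show the block is of order one, the opposite of the decay you need. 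The paper's actual mechanism is a change of measure implemented through H\"older's inequality: one multiplies and divides by $\prod g(\omega_{i,z})$ over a space-time corridor, with $g$ penalizing sites where $\omega$ exceeds $C_2 n^{(2+d)/(2\gamma)}$; the cost factor is $O(1)$ per block by the tail assumption, while the gain is a factor $\exp(-c\beta n^{-(\gamma-1)(2+d)/(2\gamma)})$ per visited site, i.e.\ $\exp(-c\beta n^{1/\alpha})$ per block, which is what pushes the coarse-grained sum below $e^{-1}$ once $n\gtrsim\beta^{-\alpha}$. This penalization is the missing idea, and it is also exactly where $\gamma_c$ enters: the gain is effective only if $1-\tfrac{(2+d)(\gamma-1)}{2\gamma}>0$, i.e.\ $\gamma<\gamma_c$, so your one-sentence claim that the same scheme handles $\gamma=\gamma_c$ is unsupported --- the paper needs a genuinely different change of measure there (penalizing blocks containing a \emph{pair} of high sites with a distance-dependent threshold, Section~3), and this marginal case is a substantial part of the theorem that your proposal skips.

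The other half of \eqref{alpha} requires $p(\beta)\ge-\beta^{\alpha-o(1)}$, and your sketch (restrict to walks through the block maximum, then invoke ``concentration of the log-partition function'') does not go through: at scale $L_\beta$ the model is precisely in the regime where $\log Z$ is \emph{not} close to $\log\E Z$ --- that is what the first half of the theorem asserts --- so the concentration input is circular; the non-circular variant (selecting one path through the high site) loses $L\,\E[\log(1+\beta\omega)]\asymp-\beta^{\gamma}L$ per block, which at $L=\beta^{-\alpha}$ dwarfs the one-big-jump gain and only yields $p(\beta)\ge-C\beta^{\gamma}$ with $\gamma<\alpha$, insufficient for the limit. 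The paper proceeds differently: it truncates the environment at $\beta^{-\kappa}$ with $\kappa$ slightly below $\gamma_c/(\gamma_c-\gamma)$, proves uniform integrability of the renormalized truncated partition function by bounding its $(1+q)$-th moment (size-biasing the environment along a quenched path and reducing to a pinning-type renewal estimate, the machinery of Sections~4--5), concludes that the truncated free energy vanishes, and absorbs the entire loss into the explicit normalization $\log c_\beta\ge-C\beta^{\kappa(\gamma-1)+1}=-C\beta^{\alpha-\eps}$; the same argument with $\kappa$ arbitrary is what gives \eqref{inf} at $\gamma=\gamma_c$. Finally, ``very strong disorder for all $\beta>0$'' (not only small $\beta$) uses monotonicity of $\beta\mapsto p(\beta)$ via FKG (the paper's appendix), which your argument also does not supply.
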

{ \begin{rem}
        We obtain in fact much more quantitative upper bound statements concerning $p$ beyond \eqref{alpha}-\eqref{inf} (detailed in the core part of the manuscript). In particular we prove that $p(\beta)\le -c\gb^{\alpha}$ in a neighborhood of zero, and we believe that this upper bound sharply describes (up to constant factor) the asymptotic behavior. 
       \end{rem}}

\begin{theorem} \label{2nd}
	Assuming the same conditions \eqref{condition2} for the environment and if $ \g  > \g_c$, we have that $\b_c > 0$ for dimensions $d \geq 3$.
\end{theorem}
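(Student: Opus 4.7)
Since $\gamma > \gamma_c = 1 + 2/d$, fix $p$ with $\gamma_c < p < \gamma$. The plan is to prove $\sup_N \E[W_N^p] < \infty$ for $\beta$ sufficiently small. Because $W_N$ is a nonnegative martingale with mean $1$, uniform $L^p$-boundedness yields uniform integrability, hence $\E[W_\infty] = 1 > 0$; combined with the $0$-$1$ law \eqref{dichotomy} this gives $W_\infty > 0$ almost surely, i.e.\ weak disorder.

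To produce the $L^p$ bound, iterate the elementary inequality $\E[(a+X)^p]\leq a^p + C_p\E[|X|^p]$ (valid for $p\in[1,2]$, $a\ge 0$, and centered $X$ with $X\ge -a$) along the martingale differences $\Delta W_n := W_n - W_{n-1}$. Writing $V_n(y) := \bE_0[\mathbf{1}_{S_n=y}\prod_{k=1}^n(1+\beta\omega_{k,S_k})]$ and $\bar V_{n-1}(y) := \frac{1}{2d}\sum_{z\sim y} V_{n-1}(z)$, the identity $\Delta W_n = \beta\sum_y \bar V_{n-1}(y)\omega_{n,y}$ expresses $\Delta W_n$ as a weighted sum of independent centered random variables given $\cG_{n-1}$. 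The von Bahr--Esseen inequality (applicable since $\E[|\omega|^p]<\infty$ for $p<\gamma$) yields $\E[|\Delta W_n|^p\mid \cG_{n-1}] \leq 2\beta^p \E[|\omega|^p]\sum_y \bar V_{n-1}(y)^p \leq 2\beta^p \E[|\omega|^p]\sum_y V_{n-1}(y)^p$ (the last step by Jensen applied to $x\mapsto x^p$), giving
\[\E[W_N^p] \leq 1 + C\beta^p \sum_{n=1}^N \E\!\Big[\sum_y V_{n-1}(y)^p\Big].\]

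The crux reduces to the decay estimate $\E[\sum_y V_n(y)^p] \leq C(n+1)^{-d(p-1)/2}$, which makes the right-hand side uniformly bounded in $N$ since our choice $p>\gamma_c$ gives $d(p-1)/2>1$. I would prove this by truncation plus a second-moment argument. Let $A = A(\beta)\to\infty$ as $\beta\to 0$ and set $\tilde\omega_{n,y} := (\omega_{n,y}\wedge A) - \E[\omega\wedge A]$, which is centered with $\E[\tilde\omega^2]\le CA^{2-\gamma}$. Writing $\tilde V_n,\tilde W_n$ for the corresponding truncated objects, the classical polymer second-moment identity gives
\[\E\!\Big[\sum_y \tilde V_n(y)^2\Big] = \bE_0^{\otimes 2}\!\left[\mathbf{1}_{S_n = S'_n}(1+\beta^2\E[\tilde\omega^2])^{L_n}\right]\leq Cn^{-d/2},\]
where $L_n := \sum_{k=1}^n \mathbf{1}_{S_k = S'_k}$; the bound holds for $\beta^2 A^{2-\gamma}$ small enough that the exponential moment $\bE_0^{\otimes 2}[e^{\lambda L_\infty}]$ stays finite---such $\lambda>0$ exists by transience of $S-S'$ in $d\ge 3$. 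Combining the pointwise Hölder interpolation $\sum_y V^p \le (\sum_y V)^{2-p}(\sum_y V^2)^{p-1}$ (valid for $p\in(1,2)$) with Hölder's inequality in $\E$ then yields
\[\E\!\Big[\sum_y \tilde V_n(y)^p\Big] \le (\E[\tilde W_n])^{2-p}\Big(\E\!\Big[\sum_y \tilde V_n(y)^2\Big]\Big)^{p-1}\leq Cn^{-d(p-1)/2}.\]

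The hardest step will then be transferring this estimate from $\tilde V$ to the original $V$, that is, controlling the remainder $\E[\sum_y (V_n-\tilde V_n)^p]$ arising from sites where $\omega>A$. I expect to exploit the Pareto tail $\P[\omega>A]\asymp A^{-\gamma}$ together with a careful choice of $A=A(\beta)$: small enough that $\beta^2A^{2-\gamma}$ stays below the transience threshold for $L_\infty$, yet large enough that the exceedance contribution remains $O(n^{-d(p-1)/2})$. This delicate balance between the truncation level and the temperature parameter, required simultaneously on two competing scales, constitutes the main technical obstacle.
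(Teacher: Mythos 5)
Your high-level plan coincides with the paper's: both establish a uniform bound on a moment of order $1+q\in(\gamma_c,\gamma)$ of the partition function (your $p$ is the paper's $1+q$, with $q>2/d$ and $q<\gamma-1$), deduce uniform integrability, hence $\E[W_\infty]=1$, and conclude weak disorder via the zero--one law. The routes diverge at the moment estimate, and this is where your argument has a genuine gap: the key inequality $\E[\sum_y V_n(y)^p]\le C(n+1)^{-d(p-1)/2}$ is only derived for the truncated field $\tilde{\omega}=\omega\wedge A-\E[\omega\wedge A]$, and the transfer back to $V_n$ --- which you yourself flag as ``the main technical obstacle'' --- is exactly where the heavy-tail difficulty sits; it is not a routine remainder estimate. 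The truncation level is forced to satisfy $A\le(\lambda_0\beta^{-2})^{1/(2-\gamma)}$ so that $\beta^2\E[\tilde{\omega}^2]\asymp\beta^2A^{2-\gamma}$ stays below the transience threshold, while $V_n-\tilde V_n$ collects the contribution of paths meeting a site with $\omega>A$: in the relevant space--time region of volume of order $n^{1+d/2}$ the number of such sites grows like $n^{1+d/2}A^{-\gamma}$, so for an $n$-independent, $\beta$-dependent $A$ there is no small-probability event to discard, and no crude bound on the exceedance term yields the uniform-in-$n$ decay $n^{-d(p-1)/2}$ you need for summability; handling it would require a resummation over large values comparable in difficulty to the original problem. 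A secondary, fixable but unproved, point: $\E[\sum_y\tilde V_n(y)^2]\le Cn^{-d/2}$ does not follow from finiteness of $\bE^{\otimes2}[e^{\lambda L_\infty}]$ alone; you need the constrained estimate $\bE^{\otimes2}[\1{S_n=S'_n}e^{\lambda L_n}]\le Cn^{-d/2}$, i.e.\ a mass-function bound for the defective meeting renewal tilted by $e^{\lambda}$ --- essentially the renewal input the paper imports through the lemma of Derrida--Giacomin--Lacoin--Toninelli.

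The paper sidesteps the truncation dilemma by size-biasing: it writes $\E[(Z_N^{\beta,\omega})^{1+q}]$ as $\bE\,\E\otimes\tilde{\E}[(Z_N^{\beta,\hat{\omega}^S})^{q}]$ with the environment tilted along a quenched path, applies Jensen (legitimate precisely because $q<1$, whereas the tilted variable has infinite mean), and reduces to fractional moments of a pinning-type partition function on the meeting renewal of two walks; the change of measure $h(x)=(1+\beta x)^{-q(1-q)}$ together with the renewal lemma then gives $A_N\le CK(N)^q\asymp N^{-qd/2}$, summable since $q>2/d$, and the only input from the disorder is that $\E[(1+\beta\omega)^{1+q}]\to1$ as $\beta\to0$, which uses only $1+q<\gamma$. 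To rescue your martingale route you would either need to prove your $\ell^p$ decay estimate by such a size-biasing argument, or find a genuinely new way to control the sites with $\omega>A$; as written, the proposal does not yet prove Theorem \ref{2nd}.
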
 

\subsection{Related works}
Among other works that deal with heavy tail environments we mention \cite{berger}, where in the setup \eqref{poly} the environment $\eta$ is allowed to belong to the domain of attraction of a $\a$-stable law and it is studied properties of paths trajectories drawn from the polymer measure. In this context there is no free energy so the work is fundamentally different from ours. In \cite{comets long jump}, it is studied the influence of the jump distribution on the delocalization-localization transition and the interplay between jump tails, spatial dimension and existence of the delocalized phase, when nearest neighbor walks are replaced by long range jumps. Our results most likely extend to that setup, the criterion for having a weak disorder phase in dimension $d=1$ becoming $\g > \gamma_c= 1+ \alpha$ where $\alpha \in (0,1)$ is the exponent of the random walk. 
We also mention \cite{brownian} as another case where a change in the environment setup (there is: moving from the IID setup to a strong spatial correlation in the environment) modifies the criterion for having no phase transition. 

\subsection{Organization of the paper}
We show upper bounds for the free energy in Sections 2 and 3, for the cases $ \g  < \g_c(d)$ and $ \g  = \g_c(d)$ respectively. In Section 4 we bound some fractional moments of the partition function, when $ \g  > \g_c(d)$. This leads to Theorem \ref{2nd} through a uniform integrability argument. In Section 5 we show a lower bound for the free energy when $ \g  < \g_c(d)$. This completes the proof of \ref{1st}.

\subsection{Notation}
For simplicity, we write $\bE[\cdot]$ and $\bP[\cdot]$ instead of $\bE_0[\cdot]$ and $\bP_0[\cdot]$ for the law of the simple random walk, starting form the origin. We also sometimes omit brackets from expectations when it is clear from the context with respect to which random variable it is integrating. For example, we may write $\bE[(\cdot)^q]$ as $\bE(\cdot)^q$. Also, to avoid ambiguities, $\bE[\cdot]^q$ always means $(\bE[\cdot])^q$.

\subsection{Acknowledgement}
The author is very grateful to Hubert Lacoin for suggesting to work with this problem, for very fruitful discussions and for his very useful comments and suggestions concerning the manuscript.

\section{Disorder relevance.} \label{1}

In this section we show an upper bound for the free energy that is required for the first limit of Theorem \ref{1st}. We base upon the proof of the upper bound of \cite[Theorem 1.4]{Lacoin}, where an analogous bound is proved, in the setup \eqref{poly}. The proof combines coarse graining, a fractional moment method and a different idea for the change of measure: we penalize sites which values are above a certain threshold. These ideas have appeared originally in \cite{hubertbound} for the pinning model and in \cite{toninelli} for the copolymer model.   
\begin{proposition}
	Assuming the environment's distribution satisfies condition \eqref{condition2} and $\g < \g_c(d)$ we have,
	\begin{equation}\label{1stupperbound}
	p(\b) \leq - C \b^\a,
	\end{equation}
	where $\a = \frac{\g(\g_c -1)}{\g_c - \g}$, for some positive constant $C$ and all $\b$ sufficiently small. 
\end{proposition}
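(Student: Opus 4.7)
I would adapt the coarse-graining + fractional-moment + change-of-measure scheme of \cite{Lacoin} to the heavy-tail setting. The novelty is the change of measure: exponential tilts (standard when $\w$ has exponential moments) are unavailable given the infinite second moment, and should be replaced by conditioning on $\w$ being bounded above by a suitable threshold. Fixing a block length $\ell$ and taking $N=k\ell$, I would decompose $Z_N=\sum_Y Z_N(Y)$ along a coarse trajectory $Y=(Y_0,\dots,Y_k)$ recording the $\sqrt\ell$-scale boxes containing $S_{i\ell}$, and apply subadditivity of $x\mapsto x^\theta$ (with $\theta=1/2$, say) combined with block-independence of the environment:
$$\E[Z_N^\theta]\le \sum_Y \prod_{i=1}^{k}\E\bigl[Z_\ell^{(i)}(Y)^\theta\bigr].$$

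For each block I would introduce $B_i$, the space-time slab of size $\asymp \ell^{1+d/2}$ (up to logs) consisting of the sites typically visited, and set $E_i=\{\w_{n,x}\le A \text{ for all }(n,x)\in B_i\}$ for a threshold $A$ to be tuned. Splitting on $E_i$ via conditional Jensen and H\"older gives
$$\E[Z_\ell^{(i)}(Y)^\theta]\le \tilde\E[Z_\ell^{(i)}(Y)]^\theta+\E[Z_\ell^{(i)}(Y)]^\theta\,\P[E_i^c]^{1-\theta},$$
where $\tilde\P=\P(\cdot\,|E_i)$. From \eqref{condition2} a short calculation gives $\tilde\E[\w]\sim -cA^{1-\g}$, hence $\tilde\E[Z_\ell^{(i)}(Y)]\le \bP[Y_{i-1}\to Y_i]\cdot \exp(-c'\b\ell A^{1-\g})$ (walks exiting $B_i$ being handled by the bound $\w\ge -1$ together with SRW exit estimates), while the union bound $\P[E_i^c]\le C\ell^{1+d/2}A^{-\g}$ controls the error term.

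I would then tune $A\asymp \ell^{(1+d/2)/\g}$ so that $\P[E_i^c]$ is a small fixed constant; the first term becomes $\exp(-c\b\ell^{1/\a})$, and a short algebraic identity gives exactly $1/\a=(1-(d/2)(\g-1))/\g=(\g_c-\g)/(\g(\g_c-1))$ — this matching is the reason the exponent $\a$ appears in the statement. At the $\sqrt\ell$-scale the local CLT makes $\sum_{Y_i}\bP[Y_i\,|\,Y_{i-1}]^{1/2}$ uniformly bounded in $Y_{i-1}$, so the coarse-trajectory sum contributes only $O(1)$ per block. Taking $\ell=(C_1/\b)^\a$ with $C_1$ large enough yields a per-block bound $\le 2/3$, hence $\E[Z_N^\theta]\le (2/3)^{N/\ell}$, and Jensen's inequality together with $N\to\infty$ gives $p(\b)\le -C/\ell\asymp -C\b^\a$.

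The delicate part of the proof is the truncation change of measure itself: the threshold $A$ must simultaneously (i) keep $\P[E_i^c]$ bounded away from $1$ (so that conditioning is meaningful) and (ii) drive $\tilde\E[\w]$ negative at the optimal rate $A^{1-\g}$. Matching the volume $|B_i|\asymp \ell^{1+d/2}$ against the tail exponent $\g$ and the diffusive scale $\sqrt\ell$ is precisely what produces $\a$, and the hypothesis $\g<\g_c$ is exactly what guarantees $1/\a>0$ so that $\ell^{1/\a}\to\infty$ as $\ell$ grows. Secondary bookkeeping issues include controlling walks that leave $B_i$ and absorbing the logarithmic corrections hidden in the estimate $|B_i|\asymp \ell^{1+d/2}(\log\ell)^{d/2}$; these are routine but require care to avoid polluting the $\b^\a$ scaling.
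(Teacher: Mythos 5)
Your proposal follows essentially the same route as the paper: coarse graining over $\sqrt{n}$-scale boxes at the block times, a fractional moment bound with $\theta=1/2$, and a change of measure that suppresses environment values above a threshold of order $n^{(2+d)/(2\gamma)}$ in the slab of typically visited sites, producing the per-visited-site tilt $\beta A^{1-\gamma}$, the per-block gain $\exp(-c\beta n^{1/\alpha})$, and hence $p(\beta)\le -C\beta^{\alpha}$ upon taking $n\asymp\beta^{-\alpha}$. The only differences are cosmetic: the paper implements the penalization via the multiplicative function $g(\omega)=1-\tfrac12\mathbf{1}\{\omega\ge C_2 n^{q}\}$ inside H\"older's inequality rather than by conditioning on the truncation event, and it uses a slab of fixed-constant width $C_3\sqrt{n}$ (handling exiting walks by a small-constant exit probability), so no logarithmic corrections arise and the clean $\beta^{\alpha}$ scaling is preserved.
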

\begin{proof}
	Fix $n \in \N$ and $\theta \in (0,1)$. By Jensen's Inequality,
	\begin{eqnarray}
	p(\beta)\leq \lim_{m \to \infty} \frac{1}{n m\theta} \log \E  \left( Z_{mn}^{\beta,\omega}\right)^\theta.
	\end{eqnarray}
	Notice that we replace the expectation of a logarithm by the estimation of a fractional moment, which in principle should be easier to handle.
	The goal is to prove that, for all $m \in \N$, 
	\begin{eqnarray}\label{goal}
	\E  \left( Z_{mn}^{\beta,\omega}\right) ^\theta \leq \exp(-m), 
	\end{eqnarray}
	for some convenient value of $n$. Specifically, let $n$ be a squared integer such that $\frac{C_1}{2 n}\leq \beta^\alpha < \frac{C_1}{n}$, where $C_1 > 0$ is a constant to be defined later, then \eqref{goal} implies
	\begin{eqnarray}
	p(\beta) \leq - \frac{\beta^\alpha}{C_1 \theta}.
	\end{eqnarray}
	 We first decompose the partition function $Z_{mn}^{\beta,\omega}$ according to the position of the walk at times $n,2n,3n,...,mn$:
	
	\begin{eqnarray}
	Z_{mn}^{\beta,\omega} = \sum_{y_1,...,y_m \in \Z^d} \bE \left[ \prod_{i=1}^{nm}
	(1 + \beta \omega_{i,S_i})
	\prod_{k=1}^{m} \1{S_{kn} \in I_{y_k}} \right], 
	\end{eqnarray}
	where the region 
	\begin{eqnarray}
	I_z :=\{x=(x_1,...,x_d) \in \Z^d : z_j \sqrt{n} \leq x_j < (z_j+1) \sqrt{n} \quad \forall j \leq d  \},
	\end{eqnarray} 
	is defined for any $z = (z_1,...,z_d) \in Z^d$.	Using the inequality
	\begin{equation}\label{jensen}
	\left( a_1 + \dots + a_t\right)^\theta \leq  a_1^\theta + \dots + a_t^\theta,
	\end{equation}
	which holds for any $\theta \in [0,1]$ and $a_i \geq 0$, we deduce
	\begin{eqnarray} \label{coarse}
	\E   \left( Z_{mn}^{\beta,\omega}\right) ^\theta   \leq \sum_{y_1,...,y_m \in \Z^d} \E\left(  \tilde{Z}_{y_1,...,y_m} \right)^\theta,
	\end{eqnarray}
	where
	$$\tilde{Z}_{y_1,...,y_m} := \bE \left[ \prod_{i=1}^{n m}
	(1 + \beta \omega_{i,S_i})
	\prod_{k=1}^{m} \1{S_{kn} \in I_{y_k}} \right].$$
	In order to bound the expectation of $\tilde{Z}_{y_1,...,y_m}$, we introduce a change of measure that penalizes higher values of the environment on regions that paths are likely to visit, increasing the value of the partition function. Consider the function
	\begin{equation}
	\begin{split}
	g:\R &\to \R \\
	x &\mapsto 1 - \tfrac{1}{2} \1{x \geq C_2 n^q} ,
	\end{split}
	\end{equation}
	for $C_2$ and $q$ constants whose values are chosen later. For ${\bf Y}:= (y_1,...,y_m) \in \left(\Z^d\right)^{\otimes m}$, define the region
	\begin{equation}
	J_{\bf Y} := \{(kn + i, \sqrt{n} y_k + z)\in \Z^{1+d} : k = 0, . . . , m - 1, i = 1, . . . , n, |z_j| \leq C_3 \sqrt{n} \quad \forall j\leq d\},
	\end{equation}
	where $C_3$ is a positive constant. This region is defined to take advantage of the concentration properties of the simple random walk. By Holder's Inequality,
	\begin{equation} \label{holder}
	\begin{split}
	\E   \left(\tilde{Z}_{y_1,...,y_m} \right)^{\theta} 
	&= \E \left[ \prod_{(i,z)\in J_{\bf Y}} g(\w_{i,z})^{-(1-\theta)} \prod_{(i,z)\in J_{\bf Y}} g(\w_{i,z})^{1-\theta}  \left(\tilde{Z}_{y_1,...,y_m} \right)^{\theta} \right]\\
	&\leq  \E \left[ \prod_{(i,z)\in J_{\bf Y}} g(\w_{i,z})^{-1} \right]^{1-\theta} \E \left[ \prod_{(i,z)\in J_{\bf Y}} g(\w_{i,z})^{\frac{1-\theta}{\theta}}   \left(\tilde{Z}_{y_1,...,y_m} \right) \right]^{\theta},
	\end{split}
	\end{equation}
	Notice that $|J_{\bf Y}|= n m (2C_3 \sqrt{n})^d$. Hence, for the first factor, we have
	\begin{equation}
	\begin{split} \label{cost}
	\E \left[ \prod_{(i,z)\in J_{\bf Y}} g(\w_{i,z})^{-1} \right] &= \E \left[ g(\w_{0,0})^{-1} \right]^{|J_{\bf Y}|} \\
	&\leq \left(1 + 2\P\left[\w_{1,0} > C_2 n^q \right]   \right)^{2^{d}C_3^d m  n^{1+d/2}}\\
	&\leq \exp\left(2^{d+1} C'' m (C_2 n^q)^{-\g} C_3^d n^{1+d/2}  \right)\\
	&= \exp\left(2^{d+1} C'' m \right),
	\end{split} 
	\end{equation}
	by choosing $C_2 = C_3^{d/\g}$ and $q = \frac{2+d}{2\g}$. Notice that for the second inequality we used 
	\begin{equation}
	C' x^{-\g} \leq \P[\w_{1,0} > x ] \leq C'' x^{-\g},  
	\end{equation}
	for some constants $C', C''$ with $C'< C_\P < C''$, for all $x$ sufficiently large, since \eqref{condition2}.
	By Fubini's Theorem, we have for the second factor,
	\begin{equation}
	\begin{split}
	\E \left[ \prod_{(i,z)\in J_Y} g(\w_{i,z})^{\frac{1-\theta}{\theta}}   \left(\tilde{Z}_{y_1,...,y_m} \right) \right] &= 
	\E \left[ \prod_{(i,z)\in J_Y} g(\w_{i,z})^{\frac{1-\theta}{\theta}}
	\bE \left[ \prod_{i=1}^{n m}
	(1 + \beta \omega_{i,S_i})
	\prod_{k=1}^{m} \1{S_{kn} \in I_{y_k}} \right] \right] \\
	&= \bE \left[ \E \left[ \prod_{(i,z)\in J_Y} g(\w_{i,z})^{\frac{1-\theta}{\theta}}
	\prod_{i=1}^{n m}
	(1 + \beta \omega_{i,S_i})
	\prod_{k=1}^{m} \1{S_{kn} \in I_{y_k}} \right] \right] \\
	&\leq \bE \left[ 
	\prod_{i \in \mathcal{I}\left(S,J_Y \right)  }\E \left[  (1 + \beta \omega_{1,0}) g(\w_{1,0})^{\frac{1-\theta}{\theta}}
	\right]\prod_{k=1}^{m} \1{S_{kn} \in I_{y_k}} \right],
	\end{split}  
	\end{equation}
	where for a given walk $S$ and a finite subset $J \subset \Z^{1+d}$ we define 
	\begin{equation}
	\mathcal{I}\left(S,J \right):= \left\lbrace i \in \N :(i,S_i) \in J \right\rbrace.
	\end{equation}
	In the last inequality above, we neglect sites on $J_Y$, for which the paths do not visit, since $\E[g(\w_{(1,0)})]\leq 1$ and sites for which paths do visit but are outside $J_Y$, since $\E[1 + \b \w_{1,0}] = 1$.
	A simple computation shows that, for $\theta = 1/2$,
	\begin{equation}
	\begin{split}
	\E \left[  (1 + \beta \omega_{1,0}) g(\w_{1,0})^{\frac{1-\theta}{\theta}}
	\right] &= \E \left[  (1 + \beta \omega_{1,0}) \left( 1 - \tfrac{1}{2} \1{\w_{1,0} \geq C_2 n^q} \right)
	\right]  \\
	&\leq \exp(-\tfrac{1}{2}C' \b (C_2 n^q)^{-(\g-1)}).
	\end{split}
	\end{equation}
	Using the Markov Property, we obtain
	\begin{equation} \label{change of measure}
	\begin{split}
	&\E \left[ \prod_{(i,z)\in J_Y} g(\w_{i,z})^{\frac{1-\theta}{\theta}}  \left(\tilde{Z}_{y_1,...,y_m} \right) \right] \leq \bE \left[ 
	\exp(-\tfrac{1}{2}C' \b (C_2 n^q)^{-(\g-1)}  \left| \mathcal{I}\left(S,J_Y \right)\right| ) 
	\prod_{k=1}^{m} \1{S_{kn} \in I_{y_k}} \right] \\
	&\leq \prod_{k=1}^{m} \max_{x\in I_0} \bE_x \left[ 
	\exp(-\tfrac{1}{2}C' \b (C_2 n^q)^{-(\g-1)} | \mathcal{I} (S,\tilde{J} )|) \1{S_{n} \in I_{y_k - y_{k-1}}} \right],
	\end{split}
	\end{equation}
	where we define the set $\tilde{J}$ as
	\begin{equation}
	\tilde{J}:= 
	\{(i,z)\in \Z^{1+d} : i = 1, . . . , n, |z_j| \leq C_3 \sqrt{n} \quad \forall j\leq d\}.
	\end{equation}
	Combining \eqref{coarse} \eqref{holder}, \eqref{cost} and \eqref{change of measure} we obtain
	\begin{equation}
	\begin{split}
	\log \E  \left[  \left(\tilde{Z}_{y_1,...,y_m} \right)^{\theta} \right] &\leq m  2^{d} C'' +  m \log \sum_{y \in \Z^d} \max_{x\in I_0} \bE_x \left[ 
	e^{-\tfrac{1}{2}C' \b (C_2 n^q)^{-(\g-1)} | \mathcal{I} (S,\tilde{J} )|}\1{S_{n} \in I_y}   \right]^{1/2}.
	\end{split}
	\end{equation}
	Then, by \eqref{goal}, it is enough to show that the expression 
	\begin{equation}
	\sum_{y \in \Z^d} \max_{x\in I_0} \bE_x \left[ 
	e^{-\tfrac{1}{2}C' \b (C_2 n^q)^{-(\g-1)} | \mathcal{I} (S,\tilde{J} )|} \1{S_{n} \in I_y}   \right]^{1/2},
	\end{equation}
	is sufficiently small. For values of $y$ far form the origin, we neglect the contribution of the change of measure:
	\begin{equation} \label{above}
	\sum_{|y|_\infty > C_4} \max_{x\in I_0} \bE_x \left[ 
		e^{-\tfrac{1}{2}C' \b (C_2 n^q)^{-(\g-1)} | \mathcal{I} (S,\tilde{J} )|} \1{S_{n} \in I_y}   \right]^{1/2} \leq
	\sum_{|y|_\infty > C_4} \max_{x\in I_0} \bP_x \left[ 
	S_{n} \in I_y  \right]^{1/2}.
	\end{equation}
	Applying standard results on sums of IID random variables, we can bound
	\begin{equation}
	\max_{x\in I_0} \bP_x \left[ 
	S_{n} \in I_y  \right] \leq \bP \left[ 
	|S_{n}|_\infty \geq  (|y|_\infty - 1)\sqrt{n}  \right] \leq e^{-c |y|_\infty^2},
	\end{equation}
	for a fixed constant $c > 0 $. In this manner, we make the sum \eqref{above} arbitrarily small, by choosing $C_4$ large enough.
	For values of $y$ near from the origin, we neglect the condition over $S_n$:
	\begin{equation}
	\begin{split}
	\sum_{|y|_\infty \leq R} \max_{x\in I_0} \bE_x \left[		e^{-\tfrac{1}{2}C' \b (C_2 n^q)^{-(\g-1)} | \mathcal{I} (S,\tilde{J} )|} \1{S_{n} \in I_y}   \right]^{1/2}\\
	\leq (2R)^{d} \max_{x\in I_0} \bE_x \left[		e^{-\tfrac{1}{2}C' \b (C_2 n^q)^{-(\g-1)} | \mathcal{I} (S,\tilde{J} )|}   \right]^{1/2}\\
	\leq (2R)^{d} \bE \left[		e^{-\tfrac{1}{2}C' \b (C_2 n^q)^{-(\g-1)} | \mathcal{I} (S,\bar{J} )|}   \right]^{1/2}, \\
	\end{split}
	\end{equation}
	where the set $\bar{J}$ is defined as
	\begin{equation}
	\bar{J}:= 
	\{(i,z)\in \Z^{1+d} : i = 1, . . . , n, |z_j| \leq (C_3 - 1 ) \sqrt{n} \quad \forall j\leq d\}.
	\end{equation}
	In the last line, we use the fact that for any walk S, starting at zero and $x \in I_0$,
	\begin{equation}
	\{i:(i,S_i) \in \bar{J}\} \subset \{i:(i,x + S_i) \in \tilde{J}\}.
	\end{equation}
	The last expression can be bounded as
	\begin{equation}
\bE \left[		e^{-\tfrac{1}{2}C' \b (C_2 n^q)^{-(\g-1)} | \mathcal{I} (S,\tilde{J} )|}   \right] \leq
	\bP\left[\exists i: (i,S_i) \notin \bar{J} \right] + 
	e^{-\tfrac{1}{2}C' \b (C_2 n^q)^{-(\g-1)} n}.
	\end{equation}
	The first term in the last sum can be made arbitrarily small, by choosing $C_3$ sufficiently big. For the second term, using our initial assumption: $n > \frac{C_1}{2 \b^\a}$ and the values of $\a = \frac{\g(\g_c-1)}{\g_c - \g}$, $q = \frac{2 + d}{2\g}$ and $\g_c = 1+\frac{2}{d}$,  we obtain,
	\begin{equation} \label{alphaa}
	\begin{split}
	e^{-\tfrac{1}{2}C' \b (C_2 n^q)^{-(\g-1)}n}&\leq e^{-\tfrac{1}{2}C' \b C_2^{-(\g-1)} n^{1/\a}}  \leq e^{-\tfrac{1}{2}C'  C_2^{-(\g-1)} \left( \frac{C_1}{2}\right)^{1/\a} },
	\end{split}
	\end{equation}    
	which can also be made arbitrarily small, by choosing $C_1$ sufficiently big. This completes the proof of the theorem.
\end{proof}
\begin{rem}\label{rem22}
The value of $\g_c= 1 + \frac{2}{d}$ appears naturally in the computations. Notice that in \eqref{alphaa}, for the value of $\a$ to be positive we need $1 - q(\g-1) > 0$ which implies $\g < 1 + 2/d$.
\end{rem}
\section{Marginal Case.}
In this section we show an upper bound for the free energy for the case $\g = \g_c$. This completes the proof that very strong disorder holds for all values of $\b > 0$, when $\g \leq \g_c$. The first part of the proof shares steps from the case when  $\g < \g_c$ up to the point of choosing the change of measure function, since that is no longer suitable in this case (cf. Remark \ref{rem22}).  The approach we take here is to penalize regions of the environment that contains a pair of sites whose values are above a certain threshold that depends on the distance between each other. The idea behind this is to account the fact that pairs of sites that are more close to each other produces a more noticeable effect on the partition function. The construction of this change of measure is inspired by the one used in \cite{marginal} to prove disorder relevance for the pinning model.
\begin{proposition}
	When $\g = \g_c$, and assuming the usual hypotesis on the environment's distribution, we have
	\begin{equation}
	p(\b) \leq - C \exp\left( - \frac{c}{\b^{2\g}}\right),
	\end{equation}
	for all $\b > 0$ sufficiently small, and some fixed constants $C, c>0$. 
\end{proposition}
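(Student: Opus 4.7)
The plan is to follow the same coarse-graining and fractional-moment framework as in Proposition 2.1 but replace the single-site change of measure by one that tilts pairs of sites. Fix a large scale $n$, set $\theta=1/2$, and decompose
\[
\E\bigl(Z_{mn}^{\b,\w}\bigr)^\theta \leq \sum_{y_1,\dots,y_m} \E\bigl(\tilde Z_{y_1,\dots,y_m}\bigr)^\theta,
\]
exactly as in \eqref{coarse}. The target inequality $\E(\tilde Z_{\bf Y})^\theta \leq e^{-m}$ must now be obtained with $n$ chosen of order $\exp(c/\b^{2\g})$ rather than a polynomial in $\b$, which will force a more delicate tilt.

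For the change of measure, I would define a function $g_{\bf Y}(\omega)$ on the environment restricted to the tube $J_{\bf Y}$ that penalizes pairs $(i,z),(j,w)\in J_{\bf Y}$ such that both $\omega_{i,z}$ and $\omega_{j,w}$ exceed a threshold depending on the space-time distance $\bigl((j-i),(w-z)\bigr)$. Concretely I propose setting
\[
g_{\bf Y}(\omega) \;=\; \prod_{\{(i,z),(j,w)\}\subset J_{\bf Y}}\!\!\!\Bigl(1-\tfrac12\,\mathbf 1_{A_{(i,z),(j,w)}}\Bigr),
\]
with $A_{(i,z),(j,w)} = \{\omega_{i,z}\wedge\omega_{j,w}\geq M\cdot \varphi(|i-j|,|z-w|)\}$, where $\varphi$ is tuned so that the expected number of pairs in $A$ inside $J_{\bf Y}$ is of order $m$ (giving the cost $\E[g_{\bf Y}^{-1}]\leq e^{Cm}$ via the same computation as \eqref{cost}). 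Because $\P[\w\geq t]\sim t^{-\g_c}$ and $|J_{\bf Y}|\asymp m n^{1+d/2}$, the right choice of $\varphi$ at the marginal exponent $\g_c=1+2/d$ will be (essentially) the square root of the random-walk transition probability between the two points. This is the step inspired by \cite{marginal}.

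After Hölder and Fubini as in \eqref{holder}--\eqref{change of measure}, the walk inherits a multiplicative weight for each pair $\{(i,S_i),(j,S_j)\}$ visited inside $J_{\bf Y}$, of the form $\E[(1+\b\w)^2(1-\tfrac12 \mathbf 1_{\w\geq M\varphi})]\leq 1-c\b^2\varphi^{-(\g-2)}$, so that
\[
\E\bigl[g_{\bf Y}^{1/\theta}\tilde Z_{\bf Y}\bigr]\;\leq\;\bE\Bigl[\exp\Bigl(-c\b^2 \sum_{\{i,j\}} \varphi(j-i,S_j-S_i)^{-(\g-2)}\,\mathbf 1_{(i,S_i),(j,S_j)\in J_{\bf Y}}\Bigr)\Bigr].
\]
The key estimate is then that for a typical random walk of length $n$ with $n\geq \exp(c/\b^{2\g})$, the double sum is of order $\b^{-2\g}\log n$ with high probability; iterating over the $m$ blocks (and controlling the spatial sum over $y_k$ as in the $\g<\g_c$ case) yields $\E(\tilde Z_{\bf Y})^{1/2}\leq e^{-1}$.

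The main obstacle will be calibrating $\varphi$ and $M$ so that the cost stays $e^{O(m)}$ while the gain per block reaches $e^{-m}$ at the desired scale $n\asymp\exp(c\b^{-2\g})$. The reason only a doubly-exponentially small bound in $\b$ is attainable (as opposed to the polynomial bound of Proposition 2.1) is that in the marginal case $\g=\g_c$ the logarithm $\log n$ appears instead of a power of $n$ in the key sum $\sum \varphi^{-(\g-2)}$, and inverting this logarithmic relation to solve for $n$ as a function of $\b$ gives exactly the $\exp(c/\b^{2\g})$ scale that produces the stated bound $p(\b)\leq -C\exp(-c/\b^{2\g})$.
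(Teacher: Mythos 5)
Your high-level strategy is the same as the paper's: coarse graining plus a fractional-moment bound at the scale $n \asymp \exp(c\beta^{-2\gamma})$, with a change of measure that penalizes pairs of large environment values at a distance-dependent threshold, in the spirit of \cite{marginal}. However, the execution sketch has genuine gaps. First, your change of measure is a product over all pairs in $J_{\bf Y}$, and your ``gain'' step silently assumes that $\E\left[ g_{\bf Y}\prod_i(1+\beta\omega_{i,S_i})\right]$ factorizes into per-pair weights. It does not: distinct pairs share sites, so neither the cost term $\E\left[g_{\bf Y}^{-(1-\theta)/\theta}\right]$ nor the gain term splits into independent factors, and no simple correlation inequality delivers the product bound you wrote. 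The paper avoids this by penalizing each block with a single factor $\exp(-M\mathbf{1}_{A_k})$, where $A_k$ is the event that there \emph{exists} at least one diffusively close pair above the threshold $V(|i-j|)$; the whole difficulty, absent from your proposal, is then to show that under the size-biased law $\P\otimes\tilde{\P}$ along the path this event occurs with probability close to $1$. This requires a second-moment (Chebyshev) argument on the pair count $X(\hat{\w}^S)$, including control of the triple-correlation term in the variance and a separate Markov-inequality step guaranteeing the walk has enough pairs with $|S_i-S_{i+t}|\le C_7\sqrt{t}$. Nothing in your sketch replaces this, and it is needed precisely because the expected number of good pairs per block is only a (large) constant, so the deterministic per-site exponential bound that works for $\gamma<\gamma_c$ has no analogue here.

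Second, your per-pair weight $\E\left[(1+\beta\omega)^2\left(1-\tfrac12\mathbf{1}_{\omega\ge M\varphi}\right)\right]\le 1-c\beta^2\varphi^{-(\gamma-2)}$ is structurally and numerically off: a penalized pair consists of two distinct sites, each tilted once by $(1+\beta\omega)$, so the gain is governed by $\tilde{\P}\left[\tilde{\w}_{1,0}\ge V\right]^2\asymp\beta^2V^{-2(\gamma-1)}$, not by $\varphi^{-(\gamma-2)}$; since $2-\gamma>0$, your written ``gain'' $\beta^2\varphi^{2-\gamma}$ actually grows with the threshold, which is nonsensical. Relatedly, the key double sum at $\gamma=\gamma_c$ is of order $(\log n)^{1/\gamma}$, so that $\beta^2$ times the sum is a large constant at $n\asymp\exp(C_5\beta^{-2\gamma})$, rather than ``of order $\beta^{-2\gamma}\log n$'' as you assert. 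Your choice of scale and the final form of the bound are correct, but the heart of the marginal-case argument --- the tilted second-moment estimate showing the penalized event is typical along the path --- is missing.
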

\begin{proof}
	Let $n \in \N$ be a squared natural number satisfying
	\begin{equation} \label{feq}
	\exp\left(\frac{C_5}{\b^{2\g}}\right) \leq n \leq \exp\left(\frac{2 C_5}{\b^{2\g}}\right).  
	\end{equation}
	where $C_5$ is a constant to be chosen later. As before, our goal is to show,
	\begin{equation}
	\E\left[ \left( Z_{mn}^{\b, \w}\right)^\theta \right] \leq \exp\left(-m \right),
	\end{equation} 
	for all $m \in \N$ and some $\theta \in (0,1)$. Using the notation from the previous section,
	\begin{eqnarray}\label{jen2}
	\E  \left[ \left( Z_{mn}^{\beta,\omega}\right) ^\theta \right]  \leq \sum_{y_1,...,y_m \in \Z^d} \E\left[  \left(  \tilde{Z}_{y_1,...,y_m} \right)^\theta \right] ,
	\end{eqnarray}
	where $\tilde{Z}_{y_1,...,y_m} = \bE \left[ \prod_{i=1}^{n m}
	(1 + \beta \omega_{(i,S_i)})
	\prod_{k=1}^{m} \1{S_{kn} \in I_{y_k}} \right]$ for  $y_1,...,y_m \in \Z^d$. Fixing the collection of vertex ${\bf Y} := (y_1,...,y_m)$ we define the blocks $\left(B_k\right)_{1\leq k \leq m}$ as
	\begin{equation}
	B_k := \left\lbrace (i,z) \in \N \times \Z^d : \lceil  i/n \rceil=k, \left|z - \sqrt{n}y_{k-1} \right|_\infty \leq C_6 \sqrt{n} \right\rbrace, 
	\end{equation} 
	where $C_6 > 0$ is chosen later. We also define the set of functions $\left\lbrace g_k(\w)\right\rbrace_{1\leq k \leq m} $ as
	\begin{equation}
	g_k(\w) := \exp\left( -M \text{\Large 1}_{A_k}\right), 
	\end{equation}
	where
	\begin{equation}
	A_k := \left\lbrace \exists (i,z), (j,z') \in B_k : |z-z'|\leq C_7 \sqrt{|i-j|}, \w_{i,z} \wedge \w_{j,z'}   \geq V(\left| i-j\right| )  \right\rbrace,
	\end{equation}
	and
	\begin{equation}
	V(t ) := 
	\exp(M^2)\left( C_6^{d} C_7^{d} n^{1+\frac{d}{2}} t^{1+\frac{d}{2}} \log n\right)^{\frac{1}{2 \gamma}},
	\end{equation}
	for $t > 0$ and $V(0) := \infty$. 
	Now we can define our change of measure function $G_{\bf Y}(\w)$ as
	\begin{equation}
	G_{\bf Y}(\w) := \prod_{k=1}^{m} g_k(\w).
	\end{equation}
	Apply Hölder's Inequality to obtain
	\begin{equation}
	\E\left[  \left(  \tilde{Z}_{y_1,...,y_m} \right)^\theta \right] \leq \E \left[G_I^{-\frac{\theta}{1-\theta}}
	\right]^{1-\theta} \E\left[ G_I  \tilde{Z}_{y_1,...,y_m} \right]^\theta. 
	\end{equation}
	Notice first that
	\begin{equation}
	\E \left[g_k(\w)^{-\frac{\theta}{1-\theta}}
	\right] = \E \left[g_k(\w)^{-\frac{\theta}{1-\theta}}
	\text{\Large 1}_{A_k} \right] + \E \left[g_k(\w)^{-\frac{\theta}{1-\theta}} \text{\Large 1}_{A_k^c}
	\right] \leq \exp\left(\frac{M \theta}{1-\theta}\right) \P \left[ A_k \right] + 1.
	\end{equation}
	The probabilities of the events $A_k$ are sufficiently small so that the product above is well controlled,
	\begin{align}
	\P \left[ A_k \right] &\leq 2 \sum_{(i,z) \in B_k} \sum_{t=1}^n \sum_{ \substack{z' \in \Z^d \\ |z-z'|\leq C_7 \sqrt{t}}}
	\P\left[\w_{i,z} \wedge \w_{i+t,z'}  \geq V(t) ,  \right] \\
	&\leq 2 \left( 2C_6\sqrt{n}\right)^d n  \sum_{t=1}^n \left( 2C_7\sqrt{t}\right)^d  C_\P^2 \left( \exp(M^2)\left(C_6^{d} C_7^{d} n^{1+\frac{d}{2}} t^{1+\frac{d}{2}} \log n\right)^{\frac{1}{2 \gamma}}\right)^{-2\g} \\
	&= 2^{2d+1} C_\P^2 \exp\left(-2\g M^2 \right) \frac{1}{\log n}  \sum_{t=1}^n \frac{1}{t},
	\end{align}
	and we can choose $M$ sufficiently large, such that $ 
	\E \left[g_k(\w)^{-\frac{\theta}{1-\theta}}
	\right] \leq 2.$
	Then,
	\begin{equation} \label{2m}
	\E \left[G_I^{-\frac{\theta}{1-\theta}}
	\right] \leq 2^m.
	\end{equation}
	Now we are left with the estimation of the second term. As before, we have
	\begin{align}\label{max}
	\E\left[ G_I \tilde{Z}_{y_1,...,y_m} \right] \leq \prod_{k=1}^{m}  \max_{x \in I_0} \bE_x \left[ \E \left[ g_1(\w) \prod_{i=1}^{n}
	(1 + \beta \omega_{i,S_i}) \right] 
	\1{S_{n} \in I_{y_k - y_{k-1}}} \right].
	\end{align}
	Plugging \eqref{2m} and \eqref{max} in Equation \eqref{jen2}, we obtain that
	\begin{equation}
	\E  \left[ \left( Z_{mn}^{\beta,\omega}\right) ^\theta \right] \leq 2^{m(1-\theta)} \left(\sum_{y\in \Z^d} \max_{x \in I_0} \bE_x \left[ \E \left[ g_1(\w) \prod_{i=1}^{n}
	(1 + \beta \omega_{i,S_i}) \right] 
	\1{S_{n} \in I_{y}} \right]^{\theta} \right)^{m},
	\end{equation}
	so it will be sufficient to show that 
	\begin{equation}
	\sum_{y\in \Z^d} \max_{x \in I_0} \bE^x \left[ \E \left[ g_1(\w) \prod_{i=1}^{n}
	(1 + \beta \omega_{(i,S_i)}) \right] 
	\1{S_{n} \in I_{y}} \right]^{\theta}
	\end{equation}
	is small. The contribution of $y$ far from the origin can be controlled as in Equation \eqref{above}. Thus it is sufficient to check that
	\begin{equation}
	\max_{x \in I_0} \bE_x \left[ \E \left[ g_1(\w) \prod_{i=1}^{n}
	(1 + \beta \omega_{i,S_i}) \right] 
	\1{S_{n} \in I_{y}} \right]^{\theta} \leq \eps,
	\end{equation} 
	for some arbitrarily small $\eps > 0$. We choose $C_6$ sufficiently big, such that
	\begin{equation}
	\bP \left[ |S_{i}| > (C_6 - 1)\sqrt{n},\text{   for some }i\leq n \right] \leq \frac{\eps}{2}.
	\end{equation}
	Then it is enough to prove,
	\begin{equation}
	\bE \left[ \E \left[ g_1(\w) \prod_{i=1}^{n}
	(1 + \beta \omega_{i,S_i}) \right] 
	\1{S_{i} \in B_1, \text{   for all }i\leq n} \right] \leq \frac{\eps}{2}.
	\end{equation}
	Notice that when $S$ is fixed, the change of measure induced by the function $\prod_{i=1}^n \left(1+\b \w_{i,S_i} \right)$ retains the independence of the elements of the environment but tilts the distribution of the ones that belong to the graph of $S$ by a factor of $\left(1+\b \w_{i,S_i} \right)$. This allows us to consider an IID set of random variables $\tilde{\w} = \{\tilde{\w}_{n,z}: n \in \N,z \in \Z^{d}\}$ from a probability space $ (\tilde{\Lambda}, \tilde{\cF}, \tilde{\P})$ of distribution given by:
	\begin{equation} \label{quenched}
	\tilde{\P}\left( \tilde{\w}_{1,0} \in \cdot \right) = \E\left[ \left(1 + \b \w_{1,0}\right) \1{\w_{1,0} \in \cdot} \right],  
	\end{equation}
	and express the inequality above as
	\begin{equation} \label{want}
	\bE \left[ \E \otimes \tilde{\E} \left[ g_1(\hat{\w}^S)  \right] 
	\1{S_{i} \in B_1, \text{   for all }i\leq n} \right] \leq \frac{\eps}{2}.
	\end{equation}	
	where for all $i \in \N$ and $z \in \Z^{d}$ we define $\hat{\w}_{(i,z)}^S$ as:
	\begin{equation}
	\hat{\w}_{(i,z)}^S := \w_{(i,z)}\1{z \not= S_i} + \tilde{\w}_{(i,z)}\1{z = S_i}.
	\end{equation}
	The idea for the rest of the proof is to show that, under the measure $\P \otimes \tilde{\P}$, the event $\{\hat{\w}^S \in  A_1\}$ is very likely, so $g_1(\hat{\w}^S)$ is equal to $\exp(-M)$ with high probability. Then, taking $M$ large should be sufficient.
	The following estimates holds for the distributions of the tilted environment, where $C>0$ is some constant and $x \geq \b^{-1}$:
	\begin{equation}
	\frac{1}{C}  \b x^{-\g +1} \leq \tilde{\P} \left[ \tilde{\w}_{1,0} \geq x \right] \leq C \b x^{-\g +1}.
	\end{equation}
	Define the random variable
	\begin{equation}
	X(\hat{\w}^S) := \sum_{0 \leq i,j < n} \1{|S_i-S_j|\leq C_7 \sqrt{|i-j|},  \hat{\w}^S_{i,S_i} \wedge \hat{\w}^S_{j,S_j}  \geq V(|i-j|)}.
	\end{equation}
	Notice that $X(\hat{\w}^S) \geq 1$ implies that $\hat{\w}^S \in A_1$ and that we can lower bound the expectation of $X(\hat{\w}^S)$ under $\P \otimes \tilde{\P}$ by,
	\begin{align}
	\E \otimes \tilde{\E} \left[ X(\hat{\w}^S) \right] &\geq  \sum_{i=0}^{n/2} \sum_{t=1}^{n/2} \tilde{\P} \left[\tilde{\w}_{1,0} \geq V(t) \right]^2 \1{|S_i-S_{i+t}|\leq C_7 \sqrt{t}}.
	\end{align}
	Since 
	\begin{equation}
	\sum_{i=0}^{n/2} \sum_{t=1}^{n/2} \tilde{\P} \left[\tilde{\w}_{1,0} \geq V(t) \right]^2 \1{|S_i-S_{i+t}|\leq C_7 \sqrt{t}} \leq \sum_{i=0}^{n/2} \sum_{t=1}^{n/2} \tilde{\P} \left[\tilde{\w}_{1,0} \geq V(t) \right]^2,
	\end{equation}
	we can choose $C_7$ sufficiently big, such that, by Markov inequality,
	\begin{align} \label{ineq12}
\bP \left[	\sum_{i=0}^{n/2} \sum_{t=1}^{n/2} \tilde{\P} \left[\tilde{\w}_{1,0} \geq V(t) \right]^2 \1{|S_i-S_{i+t}|\leq C_7 \sqrt{t}} \leq \frac{1}{2}\sum_{i=0}^{n/2} \sum_{t=1}^{n/2} \tilde{\P} \left[\tilde{\w}_{1,0} \geq V(t) \right]^2\right] \leq \eps/4.
\end{align}	
Then,	
\begin{align} \label{ineq1}
	\bP \left[\E \otimes \tilde{\E} \left[ X(\hat{\w}^S) \right] \geq \frac{1}{2}    \sum_{i=0}^{n/2} \sum_{t=1}^{n/2} \tilde{\P} \left[\tilde{\w}_{(1,0)} \geq V(t) \right]^2 \right] \geq 1- \eps/4.
	\end{align}
	In the same event, we have that
	\begin{align}
	\E \otimes \tilde{\E} \left[ X(\hat{\w}^S) \right] &\geq \frac{1}{2} \sum_{i=0}^{n/2} \sum_{t=1}^{n/2} \tilde{\P} \left[\tilde{\w}_{1,0} \geq V(t) \right]^2 \geq \frac{n}{4} \sum_{t=1}^{n/2} \left( C^{-1} \b V(t)^{-\g + 1} \right)^2 \\
	&= \frac{n}{4} \sum_{t=1}^{n/2} \left( C^{-1} \b \left(\exp(M^2)\left( C_6^{d} C_7^{d} t^{1+\frac{d}{2}} n^{1+\frac{d}{2}} \log n\right)^{\frac{1}{2 \gamma}} \right)^{-\g + 1} \right)^2 \\
	&\geq C' \b^2 \left(\log n \right)^{\frac{1}{\g}}.  
	\end{align}
	for a constant $C'$ that might depend on $M,C_6$ and $C_7$, whose values have already been chosen. On the other hand, after canceling all terms with covariance zero, we can bound the variance of $X$ as
	\begin{equation}
	\begin{split}
	\text{Var}_{\P \otimes \tilde{\P}} &\left[ X(\hat{\w}_S) \right] \leq \sum_{0 \leq i,j < n} \tilde{\P} \left[  \tilde{\w}_{i,S_i} \wedge \tilde{w}_{j,S_j}  \geq V(|i-j|) \right] \\
	&+ 4\sum_{0 \leq i,j,k < n} \tilde{\P} \left[ \tilde{w}_{i,S_i}\wedge \tilde{w}_{j,S_j}  \geq V(|i-j|),  \tilde{w}_{i,S_i} \wedge \tilde{w}_{k,S_k}  \geq V(|i-k|) \right].
	\end{split}
	\end{equation}
	The first term in the sum is similar to the expectation of $X$, and by an analogous computation, we have that,
	\begin{equation}
	\E \otimes \tilde{\E}[X(\hat{\w}_S)] \leq C'' \b^2 \left(\log n \right)^{1/\g}.
	\end{equation} 
	Let us called $Y$ the second term in the sum. Rearranging the terms of $Y$, we have that,
	\begin{equation}
	\begin{split}
	Y &\leq 32 \sum_{i = 0}^{n-1} \sum_{t = 1}^{n} \sum_{t' = 1}^{t} \tilde{\P} \left[  \tilde{w}_{i,S_i}\wedge \tilde{w}_{(i+t,S_{i+t})}  \geq V(t),  \tilde{w}_{i,S_i} \wedge \tilde{w}_{i+t',S_{i+t'}}  \geq V(t') \right] \\
	& \leq 32 \sum_{i = 0}^{n-1} \sum_{t = 1}^{n} \sum_{t' = 1}^{t} \tilde{\P} \left[\tilde{w}_{i,S_i} \geq V(t)\right] \tilde{\P} \left[ \tilde{w}_{i+t',S_{i+t'}} \geq V(t') \right] \tilde{\P} \left[ \tilde{w}_{i+t,S_{i+t}} \geq V(t) \right] \\
	&\leq 32 \sum_{i = 0}^{n-1} \sum_{t = 1}^{n} \sum_{t' = 1}^{n} \P \left[w_{1,0} \geq V(t)\right]^2 \P \left[ w_{1,0} \geq V(t') \right] \\
	&\leq C''' n \sum_{t = 1}^{n}\b \left(\left(  n^{1+\frac{d}{2}} t^{1+\frac{d}{2}} \log n\right)^{\frac{1}{2 \gamma}} \right)^{2(-\g + 1)} \sum_{t' = 1}^{n} \b \left(\left(n^{1+\frac{d}{2}} t'^{1+\frac{d}{2}}\log n\right)^{\frac{1}{2 \gamma}} \right)^{-\g + 1}  \\
	&\leq C''' \b^3 \left( \log n\right)^{\frac{3}{2} \frac{1}{\g}}. 
	\end{split}
	\end{equation}
	By Chebychev's Inequality, in the event $\{ \E \otimes \tilde{\E} \left[ X(\hat{\w}_S) \right] \geq \frac{1}{2}    \sum_{i=0}^{n/2} \sum_{t=1}^{n/2} \tilde{\P} \left[\tilde{w}_{1,0} \geq V(t) \right]^2 \}$, we have that
	\begin{equation}
	\P \otimes \tilde{\P} \left[ X(\hat{\w}_S) = 0 \right] \leq \frac{\text{Var}_{\P \otimes \tilde{\P}}\left[ X(\hat{\w}_S) \right]}{\E \otimes \tilde{\E} \left[ X(\hat{\w}_S) \right]^2} \leq \frac{C'' \b^2 \left(\log n \right)^{1/\g} + C''' \b^3 \left( \log n\right)^{\frac{3}{2} \frac{1}{\g}}}{\left(C' \b^2 \left(\log n \right)^{\frac{1}{\g}}\right)^2}.
	\end{equation}
	Recall Equation \eqref{feq} and choose $C_5$ large enough such that the last term is smaller than $\eps / 8$ and $M$ such that $\exp(-M)\leq \eps / 8$. Using this and \eqref{ineq1} we obtain \eqref{want}.
\end{proof}

\section{Disorder irrelevance.}
In this section we are going to prove Theorem \ref{2nd}. The idea is to show that when $\g > 1 + 2/d$, the sequence of partition functions is uniformly integrable so that $\E\left[  Z_\infty \right] = 1$. This is perform by bounding the $(1+q)$-th moment of the partition function for some positive $q$. In order to do this, we rewrite the problem as the estimation of the $q$-th moment of the partition function of the system where the environment's distribution has been tilted along a quenched path. This technique has appeared originally in \cite{Lacoin} for the pinning model case.
\begin{proposition}
		When $\g > \g_c$, $d \geq 3$ and assuming the usual hypothesis on the environment's distribution, we have
			\begin{equation} \label{ui}
		\sup_{N\in \N} \E \left[ \left( Z_N^{\b, \w} \right)^{1+q} \right] < \infty,
		\end{equation}
for all  $\b > 0$ sufficiently small, and some $q \in (0,\g -1)$.
\end{proposition}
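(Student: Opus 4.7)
The plan is to combine a tilting identity along a quenched random walk path with a fractional moment bound, exploiting transience of the simple random walk in $d\geq 3$ together with the finiteness of $\E[(1+\b\w)^{1+q}]$ that holds precisely because $q<\g-1$. The reformulation of $\E[Z_N^{1+q}]$ as a $q$-th moment of a path-tilted partition function is the whole point: after tilting only $(1+q)$-th moments of the environment ever appear, circumventing the failure of the classical $L^2$ approach which would need $\E[\w^2]<\infty$.

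Starting from $\E[Z_N^{1+q}]=\E[Z_N\cdot Z_N^q]$ and using the representation $Z_N=\bE_S\big[\prod_{m=1}^N(1+\b\w_{m,S_m})\big]$ in the first factor, Fubini yields
\[ \E[Z_N^{1+q}] \;=\; \bE_S\!\left[\E^S[Z_N^q]\right], \]
where $\E^S$ denotes the measure with density $\prod_{m=1}^N(1+\b\w_{m,S_m})$ with respect to $\E$. Under $\E^S$ the environment $\w$ remains a product law, but the marginal along $S$ is replaced by the tilted variable $\tilde{\w}$ of \eqref{quenched} (density $1+\b\w$), while off-path marginals are unchanged. The only tilted moment one has access to is $\tilde{\E}[(1+\b\tilde{\w})^q]=\E[(1+\b\w)^{1+q}]<\infty$; the analogous first tilted moment $\tilde{\E}[1+\b\tilde{\w}]=\E[(1+\b\w)^2]$ is already infinite for $\g<2$.

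To bound $\E^S[Z_N^q]$ I would expand the inner $Z_N$ as $\bE_{S'}[\prod_m(1+\b\w_{m,S'_m})]$, split the product according to the overlap set $L(S,S')=\{m\leq N:S'_m=S_m\}$, and apply the subadditivity $(\sum a_i)^q\leq\sum a_i^q$ (valid for $q\in(0,1)$) at a coarse-grained scale $n$ rather than step by step, so as to avoid a fatal entropic factor of order $(2d)^{N(1-q)}$. The resulting estimate has the shape
\[ \E^S[Z_N^q] \;\leq\; F_n(\b)\cdot\bE_S\otimes\bE_{S'}\!\left[(c_2/c_1)^{L(S,S')}\right], \]
with $c_1(\b)=\E[(1+\b\w)^q]\leq 1$, $c_2(\b)=\E[(1+\b\w)^{1+q}]=1+O(\b^\g)$, and $F_n(\b)$ a coarse-grained entropic correction controllable for an appropriate $n$. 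Since the difference walk $S-S'$ is transient on $\Z^d$ for $d\geq 3$, the overlap $L(S,S')$ is stochastically dominated by a geometric variable and the overlap expectation stays bounded uniformly in $N$ for $\b$ small. The quantitative condition that makes the relevant series (weighted by $\bP[S_n=S'_n]\asymp n^{-d/2}$) converge is $qd/2>1$, i.e.\ $q>2/d$; combined with the hard constraint $q<\g-1$, this forces $\g>1+2/d=\g_c$, which is our standing hypothesis.

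The main obstacle is that the naive Jensen bound $\E^S[Z_N^q]\leq(\E^S[Z_N])^q$ is useless because $\E^S[Z_N]=\infty$, so the entire estimate must be carried out using only moments of $(1+\b\w)$ of order at most $1+q<\g$, while absorbing the entropic cost of subadditivity into the decay coming from transience. Tuning the coarse-graining scale $n$ so that the admissible range $q\in(2/d,\g-1)$ is effectively matched against the summability $\sum_n n^{-d/2}<\infty$ is the delicate technical step; it is also what dictates both the critical exponent $\g_c=1+2/d$ and the dimension requirement $d\geq 3$. Once $\sup_N\E[Z_N^{1+q}]<\infty$ is secured, uniform integrability of the martingale $(Z_N)$ gives $\E[Z_\infty]=1$, and weak disorder (hence $\b_c>0$ in Theorem \ref{2nd}) follows from the dichotomy \eqref{dichotomy}.
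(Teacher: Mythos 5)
Your first step — the size-biasing/tilting identity $\E[Z_N^{1+q}]=\bE_S[\E^S[Z_N^q]]$ together with the observation that along the path only the moment $\tilde{\E}[(1+\b\tilde{\w})^q]=\E[(1+\b\w)^{1+q}]<\infty$ (because $q<\g-1$) is available while the tilted first moment is infinite — is exactly how the paper begins, and your endgame (uniform integrability, $\E[Z_\infty]=1$, hence $\b_c>0$) is also correct. The gap is in the middle: the bound $\E^S[Z_N^q]\le F_n(\b)\,\bE_S\otimes\bE_{S'}\bigl[(c_2/c_1)^{L(S,S')}\bigr]$ is asserted, not derived, and it is precisely the hard part. "Coarse-grained subadditivity" does not obviously produce it: after splitting $Z_N$ at scale $n$ and using $(\sum a_i)^q\le\sum a_i^q$, each block contribution is still the $q$-th power of a sum over paths involving the tilted variables $\tilde{\w}$, whose mean is infinite; so Jensen cannot be applied along $S$ inside a block, no factorization of the $q$-th power is available, and pushing subadditivity further down reintroduces the entropic factor $(2d)^{(1-q)\cdot(\text{block length})}$ per block, which is fatal again. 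There is also an internal inconsistency that signals the estimate has not been worked out: if your claimed bound held uniformly in $N$, finiteness would follow from transience of $S-S'$ plus $c_2/c_1\to1$ alone, for \emph{every} $q\in(0,\g-1)$, and no series of the form $\sum_n n^{-qd/2}$ would appear; yet you (correctly) state that the needed condition is $qd/2>1$. The constraint $q>2/d$ is real, but it comes from a different mechanism than the one your sketch produces.

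For comparison, the paper resolves this step without coarse graining: it applies Jensen only where the relevant means are finite — over the off-path (untilted) disorder and over the walk $S$ — which reduces the problem to the $q$-th moment of a one-dimensional pinning partition function on the intersection renewal $\bar\tau$ of two independent walks, with inter-arrival law $K(n)\asymp n^{-d/2}$ and environment $\tilde{\w}$. That fractional moment is then controlled by H\"older's inequality with the change of measure $h(x)=(1+\b x)^{-q(1-q)}$, which makes both factors depend only on $\E[(1+\b\w)^{1+q}]\le 1+\d$ (finite since $q<\g-1$), followed by the iteration lemma of \cite[Proposition 2.5]{hubertbound} giving $\tilde{\E}[(\bar Z_N)^q]\le C\,K(N)^q$; summability of $K(N)^q\asymp N^{-qd/2}$ is exactly where $q>2/d$, $d\ge 3$, and hence $\g>\g_c$, enter. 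Your proposal contains no substitute for this pinning reduction and iteration step, so as written it does not yield \eqref{ui}.
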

\begin{proof}
	Rewrite partition function above as
	\begin{equation}
	\begin{split}
	\E \left[ \left( Z_N^{\b, \w} \right)^{1+q} \right] &=  \E \left[\bE \left[\prod_{i=1}^N \left(1+\b \w_{i,S_i} \right)  \right]  \left( Z_N^{\b, \w} \right)^{q} \right] \\
	&= \bE \left[\E \left[\left( Z_N^{\b, \w} \right)^{q} \prod_{i=1}^N \left(1+\b \w_{i,S_i} \right)   \right]   \right].
	\end{split} 
	\end{equation}
	Using the notation introduced in Equation \eqref{quenched} we write the expectation above as
	\begin{equation}
	\E \left[\left( Z_N^{\b, \w} \right)^{q} \prod_{i=1}^N \left(1+\b \w_{i,S_i} \right)   \right]  = \E \otimes \tilde{\E} \left[\left( Z_N^{\b, \hat{\w}^S} \right)^{q}  \right],  
	\end{equation}
	where
	\begin{equation}
	\hat{\w}_{i,z}^S := \w_{i,z}\1{z \not= S_i} + \tilde{\w}_{i,z}\1{z = S_i},
	\end{equation}
	for all $i \in \N, z \in \Z^{d}$. The reason for which this consideration might be useful is that more techniques
	are available to control $p$-moments for $p$ in the interval $(0, 1)$ than for $p > 1$. We can also express $Z_N^{\b, \hat{\w}^S}$ as 
	\begin{equation}
	Z_N^{\b, \hat{\w}^S} = \bE' \prod_{i=1}^{N} \left(1+\b \hat{\w}_{i,S'_i}^S \right) ,
	\end{equation}
	where $  \left( \O',\bP', S'\right) $ is an independent copy of $  \left( \O,\bP, S\right) $. By Fubini's Identity and Jensen's Inequality
	\begin{equation}
	\begin{split}
	\E \left[ \left( Z_N^{\b, \w} \right)^{1+q} \right] &\leq  \bE \otimes \tilde{\E}\otimes \E \left[\left(\bE' \prod_{i=1}^{N} \left(1+\b \hat{\w}_{i,S'_i}^S \right) \right)^{q} \right]\\
	&\leq \bE \otimes \tilde{\E}\left[\left( \bE'  \prod_{i=1}^N \left(1+\b \tilde{\w}_{i,S'_i}\1{S_i = S'_i} \right)  \right)^q \right]. \\	\end{split}
	\end{equation}
	We used Jensen inequality here to obtain a more tractable expression to
	estimate. Notice that we cannot simply
	apply it for $\tilde{\E}$ as $\tilde{\w}$ has infinite mean.
Also notice that, we can rewrite the expectation inside as
	\begin{equation}
	\tilde{\E}\left[\bE' \left[ \prod_{i=1}^N \left(1+\b \tilde{\w}_{i,0}\1{S_i = S'_i} \right) \right]^q    \right], 
	\end{equation} 
	since for a fixed path $S$, the joint distributions of $\left\lbrace \tilde{\w}_{(1,S_1)}, \dots, \tilde{\w}_{(N,S_N)} \right\rbrace $ and
	$\left\lbrace \tilde{\w}_{(1,0)}, \dots, \tilde{\w}_{(N,0)} \right\rbrace $ are identical. By simplicity we write $\tilde{\w}_{i,0}$ as $\tilde{\w}_i$. Using Jensen's Inequality and Fubini's Theorem one more time, we have
	\begin{equation}
	\E \left[ \left( Z_N^{\b, \w} \right)^{1+q} \right] \leq
	\tilde{\E} \left[ \left( \bE'\otimes \bE \prod_{i=1}^N \left(1+\b \tilde{\w}_i \1{S_i = S'_i} \right) \right) ^q    \right].
	\end{equation}
	Observe that the expression on the right corresponds to the $q$-th moment for the partition function of a one dimensional pinning model, with free boundary condition, associated with a transient renewal process $\overline{\tau}$ whose inter-arrival distribution $\overline{\bP}$ satisfies:
	\begin{equation} \label{renewal}
	\overline{\bP}\left[ \overline{\tau} = n\right] =: K(n) = \bP \otimes \bP' \left[S_1 \not= S_1',\dots,S_{n-1} \not= S'_{n-1}, S_n = S_n' \right],
	\end{equation}
	and an environment given by a realization of $\left\lbrace \tilde{\w}_1, \dots, \tilde{\w}_N \right\rbrace $. With this notation we can write
	\begin{equation}
	\bE'\otimes \bE \prod_{i=1}^N \left(1+\b \tilde{\w}_i \1{S_i = S'_i} \right)  = \overline{\bE} \prod_{i=1}^N \left( 1 + \b \tilde{\w}_i \1{i \in \overline{\tau}} \right).
	\end{equation}
	Let us consider the constrained version of the pinning model:
	\begin{equation}
	\bar{Z}_N^{\b, \tilde{\w}} := \overline{\bE}\left[\prod_{i=1}^N \left(1+\b \tilde{\w}_i \1{ i \in \overline{\tau}} \right) \1{N \in \overline{\tau}} \right].
	\end{equation}
	Notice that it is sufficient to show that
	\begin{equation} \label{bound on pinning}
	\sum_{N=1}^\infty \tilde{\E} \left[ \left( \bar{Z}_N^{\b, \tilde{\w}}   \right)^q \right] < \infty,
	\end{equation}
	since we can write, by the Markov property,
	\begin{equation} \label{pinning}
	\begin{split}
	\overline{\bE} \left[ \prod_{i=1}^N \left( 1 + \b \tilde{\w}_i \1{i \in \overline{\tau}} \right) \right] &= \sum_{j=0}^N \overline{\bE} \left[ \prod_{i=1}^N \left( 1 + \b \tilde{\w}_i \1{i \in \bar{\tau}} \right) \1{j \in \bar{\tau},j+1 \notin \bar{\tau},\dots,N \notin \bar{\tau}} \right]\\
	&=\sum_{j=1}^N \bar{Z}_j^{\b, \tilde{\w}} \overline{\bP} \left[ \bar{\tau} > N-j \right] \leq  \sum_{j=1}^N \bar{Z}_j^{\b, \tilde{\w}},
	\end{split}
	\end{equation}
	and 
	\begin{equation}
	\tilde{\E}\left[\left(\sum_{j=1}^N \bar{Z}_j^{\b, \tilde{\w}} \right)^q  \right] \leq  \sum_{N=1}^\infty \tilde{\E} \left[ \left( \bar{Z}_N^{\b, \tilde{\w}}   \right)^q \right].
	\end{equation}
	Similarly as we did in the previous sections, we apply Holder's Inequality and a change of measure to get ride of the exponent $q$. By H\"older's Inequality, we have
	\begin{equation}
	\tilde{\E} \left[ \left( \bar{Z}_N^{\b, \tilde{\w}}   \right)^q \right] \leq 
	\tilde{\E} \left[ \left(  \prod_{i=1}^{N}h(\tilde{\w}_i )^{-1} \right)^{\frac{1}{1-q}} \right]^{1-q} 
	\tilde{\E} \left[ \left(  \prod_{i=1}^{N}h( \tilde{\w}_i) \right)^{1/q} \bar{Z}_N^{\b, \tilde{\w}} \right]^{q},
	\end{equation}
	for some positive function $h:\R \to \R$.  We choose to use
	\begin{equation}
	\begin{split}
	h:\R &\to \R \\
	x &\mapsto (1+\b x)^{-q(1-q)},
	\end{split}
	\end{equation}
	for the change of measure as in \cite{Lacoin} since it gives us the same weight $ \tilde{\E} \left[ (1 + \b \tilde{\w}_i)^q \right]$ for both factors after applying H\"older's Inequality, as it is seen below in Equations \eqref{key11} and \eqref{key22}. We compute the first expectation using the IID structure of the environment:
	\begin{equation}\label{key11}
	\tilde{\E} \left[ \left(  \prod_{i=1}^{N}h(\tilde{\w}_i)^{-1} \right)^{\frac{1}{1-q}} \right] = \tilde{\E} \left[ (1 + \b \tilde{\w}_i)^q \right]^{N} = \E\left[(1 + \b \w_i)^{1+q} 
	\right]^{N}.
	\end{equation}
	For the second expectation,
	\begin{equation} \label{key22}
	\begin{split}
	\tilde{\E} \left[ \left(  \prod_{i=1}^{N}h( \tilde{\w}_i) \right)^{1/q} \bar{Z}_N^{\b, \tilde{\w}} \right] &= 
	\tilde{\E} \left[ \left(  \prod_{i=1}^{N} (1+\b  \tilde{\w}_i) \right)^{-(1-q)} 
	\overline{\bE}\left[\prod_{i=1}^N \left(1+\b \tilde{\w}_i \1{ i \in \overline{\tau}} \right) \1{N \in \overline{\tau}} \right]
	\right] \\ 
	&\leq \overline{\bE}\left[\prod_{i \in \overline{\tau} \cap [1,N]} 
	\E\left[(1 + \b \w_1)^{1+q}\right]  \prod_{i \in [1,N] \backslash \overline{\tau}} 
	\E\left[(1 + \b \w_1)^{q}\right] \1{N \in \overline{\tau}} \right] \\
	&\leq \overline{\bE}\left[\prod_{i=1}^N 
	\E\left[(1 + \b \w_i)^{1+q}\right] \1{ i \in \overline{\tau}} \1{N \in \overline{\tau}}  \right].
	\end{split}
	\end{equation}
	In the last inequality, we neglect that contribution of sites for which the renewal process does not visit, since $\E\left[(1 + \b \w_i)^{q}\right]  \leq 1$. By Dominated Convergence we have
	\begin{equation}
	\lim_{\b \to 0} \E\left[(1 + \b \w_{(1,0)})^{1+q}\right]  = 1,
	\end{equation}
	since $q < \g -1$. For a given $\d > 0$, let $\b_0 = \b_0(\d)$ be such that, $\E\left[(1 + \b \w_{(1,0)})^{1+q} \right] \leq 1+\d$ for all $\b \leq \b_0$. Then, \eqref{key11}, \eqref{key22} and standard asymptotic results on the returning time of the simple random walk yields,
	\begin{equation} \label{ineq}
	\begin{split}
	\tilde{\E} \left[ \left( \bar{Z}_N^{\b, \tilde{\w}}   \right)^q \right] &\leq \left(1+\d\right)^N \overline{\bP}\left[
	N \in \overline{\tau} \right]^q \\
	&\leq \left(1+\d\right)^N \left( \dfrac{C_d}{N^{d/2}}\right)^q.
	\end{split}
	\end{equation}
	With the help of the following criterion, whose proof can be found in \cite[Proposition 2.5]{hubertbound}, we  see that this last bound suffices for our purpose. Let $A_n : = \tilde{\E}\left[\left( \bar{Z}_{n}^{\b, \tilde{\w}}\right)^q  \right]$.
	
	%
	\begin{lemma}
		If $k \in \N$ is such that
		\begin{equation} \label{rho}
		\rho := \tilde{\E}\left[ (1+\b \tilde{w}_{(1,0)})^q\right] 
		\sum_{n=k}^{\infty} \sum_{j=1}^{k-1} K(n-j)^q A_j < 1
		\end{equation}
		Then there exists $C = C(\rho, q, k, K(\cdot)) > 0$ such that
		\begin{equation} \label{final}
		A_N \leq C (K(N))^q,
		\end{equation}
	\end{lemma}
	for every $N \in \N$.
	%

	%
	%
	Assuming we can get \eqref{rho}, the proof is complete since we can use \eqref{final}, $K(N) \stackrel{N \to \infty}{\sim} \frac{C_d'}{N^{d/2}}$ since $d \geq 3$ and $q > 2/d$ to obtain \eqref{bound on pinning}. By \eqref{ineq} we see that
	\begin{equation}
	\begin{split}
	\rho &\leq (1+\d)^{k+1} \sum_{n=k}^{\infty} \sum_{j=1}^{k-1} K(n-j)^q \dfrac{(C_d)^q}{j^{dq/2}} \\
	&\leq C' (1+\d)^k \sum_{j=1}^{k-1} \dfrac{1}{j^{dq/2}}\frac{1}{(k-j)^{dq/2 - 1}} \\
	&\leq C' (1+\d)^k \left(
	\sum_{j=1}^{k/2} \dfrac{1}{j^{dq/2}}\frac{1}{(k-j)^{dq/2 - 1}}
	+ \sum_{j=k/2}^{k-1} \dfrac{1}{j^{dq/2}}\frac{1}{(k-j)^{dq/2 - 1}}
	\right) \\
	&\leq C' (1+\d)^k \left(
	\left( \sum_{j=1}^{\infty} \dfrac{1}{j^{dq/2}}\right) \frac{1}{(k/2)^{dq/2 - 1}}
	+  \dfrac{1}{(k/2)^{dq/2 - 1}}
	\right),
	\end{split}
	\end{equation}
	for some constant $C'$. Then, for a given $\eps > 0$ we can choose $k \in \N$ such that 
	\begin{equation}
	\left( \sum_{j=1}^{\infty} \dfrac{1}{j^{dq/2}}\right) \frac{1}{(k/2)^{dq/2 - 1}}
	+  \dfrac{1}{(k/2)^{dq/2 - 1}} < \eps/(2C') 
	\end{equation}
	and then choose $\d > 0$ such that $(1+\d)^k < 2$. This proves that we can made the value of  $\rho$ arbitrary small.
\end{proof}
\section{Lower bound.}
In this section, we show a lower bound for the free energy, assuming $\g < \g_c$. This completes the proof of Theorem \ref{1st}.
\begin{proposition}
		Given $\eps > 0$, under usual hypothesis on the environment's distribution, $\g < \g_c(d)$ and $d \geq 3$ we have
	\begin{equation}\label{1stlowebound}
	p(\b) \geq - C_\eps \b^{\a-\eps},
	\end{equation}
for all $\b > 0$ sufficiently small.
\end{proposition}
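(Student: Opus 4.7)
The strategy is to adapt the classical second-moment / Paley-Zygmund approach of \cite{comets} to the heavy-tailed setting by truncating the environment, and then upgrade the resulting block-level lower bound to a free-energy bound by coarse-graining. Fix $\eps>0$ and set $N = N_\b := \lfloor \b^{-(\a-\eps)}\rfloor$; this is the coarse-graining scale on which $\log Z_N$ should be of order one.

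First I would truncate: for a level $T = T_\b$ to be chosen, define $\hat\w_{i,z} := \w_{i,z}\,\1{\w_{i,z} \leq T}$ and
\begin{equation*}
\hat Z_N := \bE\left[\prod_{i=1}^N (1 + \b\, \hat\w_{i,S_i})\right].
\end{equation*}
Since $\hat\w \leq \w$ pointwise and $\b\in[0,1)$, we have $Z_N \geq \hat Z_N$, so it suffices to bound $\hat Z_N$ from below. A direct computation gives $\E[\hat Z_N] = (1-\b\mu_T)^N$ with $\mu_T := -\E[\hat\w] \sim c\, T^{1-\g}$, and $\hat Z_N/\E[\hat Z_N]$ is a positive mean-one martingale. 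Expanding its second moment yields
\begin{equation*}
\E\!\left[\left(\frac{\hat Z_N}{\E[\hat Z_N]}\right)^{\!2}\right] = \bE^{\otimes 2}\!\left[\exp\!\left(\sum_{i=1}^N \log\!\left(1+\tfrac{\b^2 \sigma_T^2}{(1-\b\mu_T)^2}\right)\1{S_i = S'_i}\right)\right],
\end{equation*}
where $\sigma_T^2 = \operatorname{Var}(\hat\w) \sim c\,T^{2-\g}$ and $(S,S')$ are two independent simple random walks. Since $d\geq 3$, the total overlap $R_\infty = \sum_{i \geq 1}\1{S_i = S'_i}$ has finite exponential moments up to some critical constant, so this is uniformly bounded in $N$ as long as $\b^2 T^{2-\g}$ stays below an explicit threshold.

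Next I would apply Paley-Zygmund to obtain $\hat Z_N \geq \tfrac12(1-\b\mu_T)^N$ with probability at least some $p_0>0$, hence the same for $Z_N$. A standard coarse-graining / concatenation argument on $m$ disjoint blocks of length $N$ (whose ``good-block'' events are independent since disjoint environments are independent) then upgrades this to the quenched bound $p(\b) \geq \log(1-\b\mu_T) - C/N \geq -C'\b\mu_T - C''/N$. Finally, optimizing $T$ under the constraint $\b^2 T^{2-\g} \leq c_0$ and balancing $\b\mu_T$ against $1/N$ should yield $p(\b) \geq -C_\eps\, \b^{\a-\eps}$.

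The main obstacle is the tuning of $T$: setting $T$ at the largest level allowed by the second-moment constraint alone gives only the exponent $\g/(2-\g)$, which for $d\geq 3$ is strictly smaller than the target $\a = 2\g/(d+2-d\g)$. To match $\a$ (up to the $\eps$ loss), I would refine the second moment by further restricting the walks entering $\hat Z_N$ to avoid a sparse set of truly exceptional sites (those where $\w > T'$ for some $T' \gg T$); such a restriction reduces the effective variance contributed at each overlap collision and permits $T$ to be pushed up toward the scale $T \sim N^{(d+2)/(2\g)}$ that appears in the upper bound of Section 2, at the cost of log-factors that the $\eps$ slack absorbs. Implementing this restricted second moment cleanly---decoupling the randomness of the exceptional sub-environment from the bulk computation, and verifying that the restriction of walks costs only a constant factor in the Paley-Zygmund step---is the technical heart of the argument.
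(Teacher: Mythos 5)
Your set-up (truncate the environment, compare $Z_N$ to the truncated partition function, pay the cost of the truncation in the annealed normalization) is the right starting point, and your own diagnosis of the difficulty is accurate: under the plain second-moment constraint $\b^2\sigma_T^2\lesssim 1$ you can only afford $T\lesssim \b^{-2/(2-\g)}$, and then the per-step cost $\b\mu_T\asymp\b T^{1-\g}$ gives at best $p(\b)\geq -C\b^{\g/(2-\g)}$, which is strictly weaker than $-C_\eps\b^{\a-\eps}$ for every $d\geq 3$ and $\g\in(1,\g_c)$. The step that would close this gap --- your ``restricted second moment'' avoiding exceptional sites, which is supposed to push $T$ up to the scale $\b^{-\g_c/(\g_c-\g)}$ --- is exactly the heart of the proposition and is not carried out; you explicitly defer it, so as written the argument does not prove the stated bound. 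It is also far from clear that pruning sites with $\w>T'$ can work as described: restricting the walk to avoid the sites in $(T',T]$ essentially reproduces a truncation at level $T'$ (the avoided sites no longer contribute to the mean either), so the variance/mean trade-off is not obviously improved. A secondary, more routine gap: Paley--Zygmund gives $\hat Z_N\geq\tfrac12\E[\hat Z_N]$ only with probability $p_0$, and concatenating ``good'' blocks requires point-to-region partition functions (control of $S_{kN}$ at block boundaries), not merely independence of disjoint environment blocks.

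The paper closes the exponent gap by abandoning the second moment altogether and interpolating: it bounds a moment of order $1+q$ with $q$ slightly above $\g_c-1=2/d$, so that $1+q$ lies strictly between $\g_c$ and $2$. Concretely, it truncates at $\b^{-\kappa}$ with $\kappa$ just below $\g_c/(\g_c-\g)$ (the same scale as your target $T\sim N^{(d+2)/(2\g)}$), writes $Z_N^{\b,\w}\geq c_\b^N\,\breve Z_N^{\b,\w}$ with normalized weights $(1+\b\,\w\wedge\b^{-\kappa})/c_\b$, and observes that the whole exponent comes from the normalization: $\log c_\b\geq -C\b^{1+\kappa(\g-1)}\geq -C\b^{\a-\eps}$. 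It then shows $\sup_N\E[(\breve Z_N^{\b,\w})^{1+q}]<\infty$, hence uniform integrability, a.s.\ positivity of the martingale limit and $\lim_N\frac1N\log\breve Z_N^{\b,\w}=0$. The $(1+q)$-moment is handled by the size-biasing device: it equals a $q$-th moment ($q<1$) of the partition function in an environment tilted along an independent quenched path; Jensen and Fubini reduce it to a pinning-type partition function over the renewal of overlap times of two walks, with $K(n)\sim C_d n^{-d/2}$; H\"older with $h(x)=((1+\b x)/c_\b)^{-q(1-q)}$ reduces everything to the single quantity $\E\bigl[\bigl((1+\b\,\w\wedge\b^{-\kappa})/c_\b\bigr)^{1+q}\bigr]$, which tends to $1$ as $\b\to0$ precisely because $\kappa<\g_c/(\g_c-\g)$; summability of $K(n)^q$ needs $dq/2>1$, i.e.\ $q>2/d$, which is where $d\geq3$ and $\g<\g_c$ enter. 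If you want to rescue your plan, this is the missing idea: replace the exponent $2$ in your moment computation by an exponent $1+q\in(\g_c,2)$, which is what allows the truncation level (and hence the exponent $\a-\eps$) that the second moment forbids.
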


\begin{proof}
Consider the following partition function of a truncated version of the environment:  
\begin{equation} \label{bounded}
\breve{Z}_N^{\b, \w} := \bE \left[\prod_{i=1}^N \frac{1+\b \w_{i,S_i}\wedge\b^{-\kappa}}{c_\b } \right], 
\end{equation}
where $c_\b := \E\left(1+\b \w_{1,0}\wedge\b^{-\kappa} \right) $ and $\kappa>0$ is a constant to be fixed soon. Observe that,
\begin{equation}
\lim_{N \to \infty} \frac{1}{N} \log Z_N^{\b, \w} \geq \lim_{N \to \infty} \frac{1}{N} \log Z_N^{\b, \w \wedge \b^{-\kappa}} =  \log c_\b + \lim_{N \to \infty} \frac{1}{N} \log \breve{Z}_N^{\b, \w}.
\end{equation}
This implies that, by showing that
\begin{equation} \label{free3}
\lim_{N \to \infty} \frac{1}{N} \log \breve{Z}_N^{\b, \w} = 0,
\end{equation}
we obtain the result since we can choose $\kappa \in \left(\frac{\g_c}{\g_c - \g} - \frac{\eps}{\g - 1}, \frac{\g_c}{\g_c - \g} \right) $ to get 
\begin{equation}
 \log c_\b \geq - C_\kappa\b^{\kappa(\g-1) + 1} \geq -C_\kappa \b^{\frac{\g(\g_c-1)}{\g_c - \g} - \eps},
\end{equation}
for a constant $C_\kappa>0$ that depends also on the environment's distribution.

To prove \eqref{free3}, we will adapt the same strategy as Section $4$: showing that
	\begin{equation} \label{ui2}
	\sup_{N\geq 1} \E \left[ \left( \breve{Z}_N^{\b, \w} \right)^{1+q} \right] < \infty,
	\end{equation}    
	for some $q \in (\g_c - 1 ,1)$. As we did before, we can write this expectation as
	\begin{equation}
	\begin{split}
	\E \left[ \left( \breve{Z}_N^{\b, \w} \right)^{1+q} \right] &=  \E \left[\bE \left[\prod_{i=1}^N  \frac{1+\b \w_{i,S_i}\wedge\b^{-y}}{c_\b  } \right]  \left( \breve{Z}_N^{\b, \w} \right)^{q} \right] \\
	&= \bE \left[\E \left[\left( \breve{Z}_N^{\b, \w} \right)^{q} \prod_{i=1}^N  \frac{1+\b \w_{i,S_i}\wedge\b^{-y}}{c_\b }  \right]   \right].
	\end{split} 
	\end{equation}
Then, considering the IID random variables $\tilde{\w} = \{\tilde{\w}_{n,z}: n \in \N ,z \in \Z^{d}\}$ from a probability space $ (\tilde{\Lambda}, \tilde{\cF}, \tilde{\P})$ of distribution given by:
	\begin{equation}
	\tilde{\P}\left( \tilde{\w}_{1,0} \in A \right) = \E\left[  \frac{1+\b \w_{1,0}\wedge\b^{-\kappa}}{c_\b } \1{\w_{1,0} \in A} \right],  
	\end{equation}
	we could express the expectation above as
	\begin{equation}
	\E \left[\left( \breve{Z}_N^{\b, \w} \right)^{q} \prod_{i=1}^N  \frac{1+\b \w_{i,S_i}\wedge\b^{-y}}{c_\b}  \right]  = \E \otimes \tilde{\E} \left[\left( Z_N^{\b, \hat{\w}^S} \right)^{q}  \right],  
	\end{equation}
	where for all $i,z$ we use the notation introduced in Equation \eqref{quenched}: 
	\begin{equation}
	\hat{\w}_{i,z}^S := \w_{i,z}\1{z \not= S_i} + \tilde{\w}_{i,z}\1{z = S_i}.
	\end{equation}
	Let us denote $\w'_{i,S'_i} := \w_{i,S'_i}\wedge \b^{-y} $ and  $\tilde{\w}'_{i,S'_i} := \tilde{\w}_{i,S'_i}\wedge \b^{-y}$.  Fubini's Theorem and Jensen's inequality yields
	
	\begin{equation}
	\begin{split}
	\E \left[ \left( \breve{Z}_N^{\b, \w} \right)^{1+q} \right] &\leq  \bE \otimes \tilde{\E}\left[ \left( \E \left[ \breve{Z}_N^{\b, \hat{\w}^S} \right]\right) ^q    \right]\\
	&= \bE \otimes \tilde{\E}\left[ \left( \E \otimes \bE' \prod_{i=1}^{N}\frac{1+\b \w'_{i,S'_i} \1{S'_i \not= S_i} + \b \tilde{\w}'_{i,S'_i} \1{S'_i = S_i} }{c_\b} \right)^q    \right]  \\
	&\leq \bE \otimes \tilde{\E}\left[  \left( \bE' \prod_{\substack{1\leq i\leq N \\ S_i = S'_i}}\frac{1+\b \tilde{\w}'_{i,S_i}}{c_\b} \right)^q \right],
	\end{split}
	\end{equation}
	where $  \left( \O',\bP', S'\right) $ is an independent copy of $  \left( \O,\bP, S\right) $. As before, we replace $\tilde{\w}'_{i,S_i}$ by $\tilde{\w}'_{i,0}$ (which we now simply denote by $\tilde{\w}'_{i}$) and apply Jensen's Inequality one more time to get,
	\begin{equation}
	\E \left[ \left( \breve{Z}_N^{\b, \w} \right)^{1+q} \right] \leq
	\tilde{\E}\left[  \left(\bE'\otimes \bE  \prod_{\substack{1\leq i\leq N \\ S_i = S'_i}}\frac{1+\b \tilde{\w}'_{i}}{c_\b}    \right)^q    \right].
	\end{equation}
	Using the same notation defined in \eqref{renewal} and by the argument in \eqref{pinning}, we are left with showing that
	\begin{equation}
	\sum_{N=1}^\infty \tilde{\E} \left[\left( \bar{\bE} \prod_{\substack{1\leq i\leq N \\ i \in \bar{\t}}}\frac{1+\b \tilde{\w}'_i }{c_\b} \1{N \in \bar{\t}} \right) ^q  \right] < \infty.
	\end{equation}
	Let us call
	\begin{equation}
	 \breve{Z}_{N,b}^{\b, \tilde{\w}'} := \bar{\bE} \left[  \prod_{\substack{1\leq i\leq N \\ i \in \bar{\t}}}\frac{1+\b \tilde{\w}'_i }{c_\b} \1{N \in \bar{\t}}\right] .
	\end{equation}
	Define the function
	\begin{equation}
	\begin{split}
	h:\R &\to \R \\
	x &\mapsto \left( \frac{1+\b x}{c_\b}\right) ^{-q(1-q)}.
	\end{split}
	\end{equation}
	By Holder's Inequality, 
	\begin{equation}
	\tilde{\E} \left[ \left(  \breve{Z}_{N,b}^{\b, \tilde{\w}'}   \right)^q \right] \leq 
	\tilde{\E} \left[ \left(  \prod_{i=1}^{N}h(\tilde{\w}'_{i})^{-1} \right)^{\frac{1}{1-q}} \right]^{1-q} 
	\tilde{\E} \left[ \left(  \prod_{i=1}^{N}h( \tilde{\w}'_{i}) \right)^{1/q}  \breve{Z}_{N,b}^{\b, \tilde{\w}'} \right]^{q}. 
	\end{equation}
	For the first expectation we have,
	\begin{equation}\label{key112}
	\begin{split}
	\tilde{\E} \left[ \left(  \prod_{i=1}^{N}h(\tilde{\w}'_i)^{-1} \right)^{\frac{1}{1-q}} \right]^{1-q} &= \tilde{\E} \left[ \left( \frac{1 + \b \tilde{\w}_1 \wedge \b^{-\kappa}}{c_\b}\right)^q \right]^{N(1-q)} \\
	&= \E\left[\left( \frac{1 + \b \w_1\wedge \b^{-\kappa}}{c_\b}\right)^{1+q} 
	\right]^{N(1-q)}.
	\end{split}
	\end{equation}
	For the second expectation,
	\begin{equation} \label{key222}
	\begin{split}
	\tilde{\E} \left[ \left(  \prod_{i=1}^{N}h( \tilde{\w}'_i) \right)^{1/q} \breve{Z}_{N,b}^{\b, \tilde{\w}'} \right]^{q} &= 
	\tilde{\E} \left[   \prod_{i=1}^{N} \left( \frac{1 + \b \tilde{\w}_i\wedge \b^{-\kappa}}{c_\b} \right)^{-(1-q)} 
\bar{\bE} \left[ \prod_{\substack{1\leq i\leq N \\ i \in \bar{\t}}}\frac{1+\b \tilde{\w}_i \wedge \b^{-\kappa}}{c_\b} \1{N \in \bar{\t}} \right]
	\right]^{q} \\ 
	&\leq \bar{\bE} \left[ \prod_{\substack{1\leq i\leq N \\ i \in \bar{\t}}}\E\left[\left( \frac{1 + \b \w_1\wedge \b^{-\kappa}}{c_\b}\right)^{1+q} 
	\right] \1{N \in \bar{\t}} \right]^q.
	\end{split}
	\end{equation}
	As we did in the previous section, we now need to show that
	\begin{equation}
	\E\left[\left( \frac{1 + \b \w_1\wedge \b^{-\kappa}}{c_\b}\right)^{1+q} 
	\right]
	\end{equation}
	is arbitrarily close to one, for all $\b$ sufficiently small. This is done in the next Lemma. After this, the rest of the proof follows the exact same lines as the proof from Section 4.
	
	\begin{lemma}
		For some $q > \g_c - 1$, we have
		\begin{equation}
			\lim_{\b \to 0} \E\left[\left( \frac{1 + \b \w_1\wedge \b^{-\kappa}}{c_\b}\right)^{1+q} 
		\right] = 1. 
		\end{equation}
	\end{lemma}  
\begin{proof}
Let us called $\w = \w_1$. Notice that we only need to focus on the numerator of the fraction since $c_\b =  \E \left[ 1 + \b \w \wedge \b^{-\kappa} \right]  \to 1$, as $\b \to 0$. For the numerator we have that, for a fixed $\d > 0$,
\begin{equation}
\begin{split}
\E\left[\left(1 + \b \w \wedge \b^{-y}\right)^{1+q} 
\right] &=\E\left[\left(1 + \b \w \right)^{1+q} 
\1{\w \leq \d \b^{-1}} \right] + \E\left[\left(1 + \b \w \right)^{1+q} 
\1{\d \b^{-1} \leq \w \leq \b^{-y}}\right] \\
&+ \E\left[\left(1 + \b^{-y+1}\right)^{1+q} 
 \1{\b^{-y} \leq \w } \right].
\end{split}
\end{equation}
For the first summand, we have that
\begin{equation}
\E\left[\left(1 + \b \w \right)^{1+q} 
\1{\w \leq \d \b^{-1}} \right] \leq (1+\d)^{1+q}.
\end{equation}
For the third summand, we have that
\begin{equation}
\E\left[\left(1 + \b^{-\kappa+1}\right)^{1+q} 
\1{\b^{-\kappa} \leq \w } \right] \leq C \b^{(-\kappa+1)(1+q) + \kappa \g}.
\end{equation}
Since $\kappa < \frac{\g_c}{\g_c - \g}$ is fixed,  we can choose the value of $q$ sufficiently close to $\g_c - 1$ so that the exponent $(-\kappa+1)(1+q) + \kappa \g > 0$, making the third summand arbitrarily close to zero. Finally, using the identity 
\begin{equation}
\E \left[ X^{p} \1{a\leq X \leq b} \right] = a^p\P\left[X \geq a \right] - 
b^p\P\left[X \geq b \right] + p \int_{a}^{b} z^{p-1}\P\left[X \geq z \right]dz,
\end{equation}
valid for any random variable $X \geq 0$, we obtain for the second summand,	
\begin{equation}
\begin{split}
\E\left[\left(1 + \b \w \right)^{1+q} \1{\d \b^{-1} \leq \w \leq \b^{-\kappa}}\right] &\leq (\d + 1)^{1+q}\P \left[\b \w \geq  \d\right] + \int_{\d}^{\b^{-\kappa+1}+1} (1+q)z^q\P\left[1+\b\w \geq z \right] dz \\
&\leq C (\d + 1)^{1+q} \d^{-\g} \b^\g +  C \b^\g \int_{\d}^{\b^{-\kappa+1}} (z^{q-\g} + z^{-\g})dz \\
&\leq C (\d + 1)^{1+q} \d^{-\g} \b^\g +  C \b^{\g + (-\kappa +1)(q - \g +1)} + C\b^\g \d^{-\g + 1}. 
\end{split}
\end{equation}
Using one more time the fact that $(-\kappa+1)(1+q) + \kappa \g > 0$ we can make the last term arbitrarily small, concluding the proof of the Lemma.	
\end{proof}	
\end{proof}	
\appendix
\section{properties of the free energy.}
\begin{theorem} \label{freenergy}
%
The function $p : [0,1) \to \R$, defined as
\begin{equation} \label{cfe}
 p(\beta) = \lim \dfrac{1}{N} \ln  Z_{N}^{\beta,\omega},
	\end{equation}
is continuous and non-increasing. 
\end{theorem}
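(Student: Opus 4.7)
My approach is to handle monotonicity and continuity separately, in both cases by controlling $\beta\mapsto \frac{1}{N}\log Z_N^{\beta,\omega}$ pointwise in $\beta$. Throughout I will use that the bound $\omega\geq -1$ together with $\beta\in[0,1)$ makes $1+\beta\omega>0$ and gives the uniform estimate $|\omega/(1+\beta\omega)|\leq \max(1/\beta,\,1/(1-\beta))$ for every $\omega\in[-1,\infty)$.

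For \textbf{monotonicity}, I would differentiate $\E[\log Z_N^{\beta,\omega}]$ under $\E$ (justified by the uniform bound just noted) to obtain
\[
\frac{d}{d\beta}\E[\log Z_N^{\beta,\omega}] \;=\; \sum_{i=1}^N \E\!\left[\bE_N^{\beta,\omega}\!\left[\frac{\omega_{i,S_i}}{1+\beta\omega_{i,S_i}}\right]\right].
\]
The main step, and in my view the trickiest part of the proof, is to rewrite each summand in a form where Jensen's inequality delivers the sign. Decomposing $Z_N^{\beta,\omega}$ according to the value of $S_i$ and introducing $A_z:=\bE\bigl[\1{S_i=z}\prod_{j\neq i}(1+\beta\omega_{j,S_j})\bigr]$ (which is measurable with respect to $\omega$ outside of time slice $i$) and $\pi_z:=A_z/\sum_{z'}A_{z'}$, a direct manipulation gives
\[
\bE_N^{\beta,\omega}\!\left[\frac{\omega_{i,S_i}}{1+\beta\omega_{i,S_i}}\right] \;=\; \frac{X_i}{1+\beta X_i}, \qquad X_i:=\sum_{z\in\Z^d} \pi_z\,\omega_{i,z}.
\]
Conditionally on $\pi=(\pi_z)_z$, one has $\E[X_i\mid \pi]=0$ because $\E[\omega]=0$ and $\pi$ is independent of $\omega_{i,\cdot}$; since $f(x)=x/(1+\beta x)$ is concave on $(-1/\beta,\infty)$ while $X_i\geq -1>-1/\beta$, Jensen yields $\E[f(X_i)\mid\pi]\leq f(0)=0$. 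Summing over $i$ gives $\tfrac{d}{d\beta}\E[\log Z_N(\beta)]\leq 0$, and since $p(\beta)=\lim N^{-1}\E[\log Z_N(\beta)]$ by the Kingman/superadditivity scheme inherited from \cite{cometsyoshido}, $p$ inherits the monotonicity.

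For \textbf{continuity on $(0,1)$}, the same uniform derivative bound gives $\bigl|\tfrac{d}{d\beta}\tfrac{1}{N}\log Z_N^{\beta,\omega}\bigr|\leq \max(1/\beta,1/(1-\beta))$ uniformly in $N$ and $\omega$, so $\{N^{-1}\log Z_N(\cdot)\}_N$ is equi-Lipschitz on every compact $[a,b]\subset(0,1)$. A.s.\ pointwise convergence on $\bbQ\cap(0,1)$ (obtained by intersecting the countable family of exceptional null sets indexed by rationals) combined with equi-continuity upgrades, via a standard Arzel\`a--Ascoli argument, to uniform convergence on each such $[a,b]$; hence $p$ is continuous on $(0,1)$.

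The Lipschitz constant degenerates at $\beta=0$, so continuity at $0$ requires a two-sided squeeze. The upper bound $p(\beta)\leq 0=p(0)$ is clear since $Z_N^{\beta,\omega}$ is a non-negative martingale of mean $1$: Markov's inequality plus Borel--Cantelli yield $\limsup_N N^{-1}\log Z_N \leq \varepsilon$ a.s.\ for every $\varepsilon>0$. For the matching lower bound, Jensen's inequality applied to $\bE$ gives
\[
\log Z_N^{\beta,\omega} \;\geq\; \bE\!\left[\sum_{i=1}^N \log(1+\beta\omega_{i,S_i})\right],
\]
whose right-hand side is a sum in $i$ of independent random variables with common mean $\E[\log(1+\beta\omega_{0,0})]$ and uniformly bounded variance (using $\sum_z \bP[S_i=z]^2\leq 1$ and the finiteness of $\var(\log(1+\beta\omega))$, which follows from \eqref{condition2} since $\log\omega$ has moments of all orders). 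Kolmogorov's SLLN then yields $p(\beta)\geq \E[\log(1+\beta\omega_{0,0})]$, while dominated convergence (with domination $|\log(1+\beta\omega)|\leq |\omega|+2$ for $\beta\leq 1/2$, integrable because $\E|\omega|<\infty$) forces $\E[\log(1+\beta\omega_{0,0})]\to 0$ as $\beta\downarrow 0$. Combined with the upper bound, this gives $p(\beta)\to 0=p(0)$.
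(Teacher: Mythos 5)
Your proposal is correct, and it is worth recording where it coincides with and where it departs from the paper's proof. For continuity on $(0,1)$ you do essentially what the paper does: the uniform bound $\bigl|\partial_\beta \tfrac{1}{N}\log Z_N^{\beta,\omega}\bigr|\le \max(1/\beta,1/(1-\beta))$ makes the finite-volume free energies equi-Lipschitz on compact subsets of $(0,1)$, and the limit inherits this. For monotonicity the paper argues differently: it applies the FKG (Harris) inequality to the decreasing function $\omega\mapsto 1/Z_N^{\beta,\omega}$ and the increasing function $\omega\mapsto\bE\bigl[\sum_{i}\omega_{i,S_i}\prod_{j\ne i}(1+\beta\omega_{j,S_j})\bigr]$, so that $\partial_\beta\,\E\log Z_N^{\beta,\omega}\le \E[1/Z_N^{\beta,\omega}]\cdot 0=0$; your route instead rewrites the Gibbs average as $X_i/(1+\beta X_i)$ with $X_i=\sum_z\pi_z\omega_{i,z}$, which is a correct identity (since $Z_N^{\beta,\omega}=\sum_z A_z(1+\beta\omega_{i,z})$ and $\pi$ depends only on the slices $j\ne i$), and concludes by conditional Jensen using concavity of $x\mapsto x/(1+\beta x)$ on $(-1/\beta,\infty)$ together with $X_i\ge -1$. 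Both give $\partial_\beta p_N\le 0$; yours avoids any correlation inequality at the price of a slightly longer computation, while the paper's FKG step is shorter but imports a standard tool. Finally, you handle continuity at $\beta=0$, which the paper's proof does not address (its Lipschitz constant degenerates there and it only concludes continuity on $(0,1)$, although the statement claims $[0,1)$): your squeeze $0\ge p(\beta)\ge \E[\log(1+\beta\omega_{0,0})]$ — upper bound via $\E Z_N^{\beta,\omega}=1$, Markov and Borel--Cantelli; lower bound via Jensen applied to $\bE$, a Kolmogorov SLLN for the independent slice variables $\sum_z\bP[S_i=z]\log(1+\beta\omega_{i,z})$ (whose variances are bounded by $\mathrm{Var}(\log(1+\beta\omega_{0,0}))<\infty$, thanks to the power tail and $\omega\ge -1$ — phrase this in terms of $\log(1+\beta\omega)$, not ``$\log\omega$''), and dominated convergence as $\beta\downarrow 0$ — is sound, so your argument in fact covers the full claim of the theorem.
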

\begin{proof}

Notice that the function $\b \mapsto p_N(\b):= \dfrac{1}{N} \E\left[ \log Z_N^{\b,\w} \right] $ is differentiable and 
\begin{equation}
\begin{split}
\dfrac{\partial}{\partial \b} p_N(\b) &= 
\dfrac{1}{N} \E\left[\dfrac{1}{Z_N^{\b,\w}} \bE \left[ \sum_{i=1}^{N} \w_{(i,S_i)} \prod_{j \in\llb1,N \rrb \backslash \{i\}} (1+\b \w_{(j,S_j)}) \right] \right] \\  
&= \dfrac{1}{N} \E\left[  \bE_N^{\b,\w} \left[ \sum_{i=1}^{N} \dfrac{\w_{(i,S_i)}}{1+\b \w_{(i,S_i)}}\right]\right].
\end{split}
\end{equation}
Then $\left| \dfrac{\partial}{\partial \b} p_N(\b) \right| \leq K_\b := \max\left\lbrace  \dfrac{1}{\b},\dfrac{1}{1-\b}\right\rbrace $ which implies  that the limit $p(\b)$ is a continuous function in $(0,1)$. Also, since the functions
\begin{equation}
\w \mapsto \dfrac{1}{Z_N^{\b,\w}},
\end{equation}
and
\begin{equation}
\w \mapsto \bE \left[ \sum_{i=1}^{N} \w_{(i,S_i)} \prod_{j \in\llb1,N \rrb \backslash \{i\}} (1+\b \w_{(j,S_j)})\right] 
\end{equation}
are decreasing and increasing respectively, applying FKG inequality, we have that
\begin{equation}
\dfrac{\partial}{\partial \b} p_N(\b) \leq 
\dfrac{1}{N} \E\left[\dfrac{1}{Z_N^{\b,\w}} \right] \E\left[ \bE\left[  \sum_{i=1}^{N} \w_{(i,S_i)} \prod_{j \in\llb1,N \rrb \backslash \{i\}} (1+\b \w_{(j,S_j)}) \right]\right]  = 0,
\end{equation}
which implies that $p(\b)$ is non-increasing.
\end{proof}

\end{document}